\newtheorem{thm}{Theorem}
\newaliascnt{cor}{thm}
\newaliascnt{lem}{thm}
\newaliascnt{prop}{thm}
\newaliascnt{con}{thm}
\newaliascnt{defn}{thm}
\newaliascnt{eg}{thm}
\newaliascnt{rem}{thm}
\newtheorem{cor}[cor]{Corollary}
\newtheorem{lem}[lem]{Lemma}
\newtheorem{prop}[prop]{Proposition}
\newtheorem{defn}[defn]{Definition}
\newtheorem{proofpart}{Part}
\newtheorem{rem}[rem]{Remark}
\crefname{cor}{corollary}{corollaries}
\crefname{lem}{lemma}{lemmas}
\crefname{prop}{proposition}{propositions}
\crefname{con}{conjecture}{conjectures}
\crefname{defn}{definition}{definitions}
\crefname{eg}{example}{examples}
\crefname{rem}{remark}{remarks}
\newcommand{\bi}{\begin{itemize}}
\newcommand{\ei}{\end{itemize}}
\newcommand{\beq}{\begin{enumerate}}
\newcommand{\eeq}{\end{enumerate}}
\newcommand{\be}{\begin{equation}}
\newcommand{\ee}{\end{equation}}
\newcommand{\bc}{\begin{center}}
\newcommand{\ec}{\end{center}}
\newcommand{\bd}{\begin{defn}}
\newcommand{\ed}{\end{defn}}
\newcommand{\bt}{\begin{thm}}
\newcommand{\et}{\end{thm}}
\newcommand{\bp}{\begin{proof}}
\newcommand{\ep}{\end{proof}}
\renewcommand{\l}{\left}
\renewcommand{\r}{\right}
\DeclareMathOperator*{\argmax}{arg\,max}
\title{Optimality of Unconstrained Pulse Inputs to the Bergman Minimal Model}
\author{
	\authorblockN{Christopher Townsend and Maria M. Seron} 
	\authorblockA{Priority Research Centre for Complex Dynamic Systems and Control,\\ School of Electrical Engineering and Computing, University of Newcastle, Australia \\
	Emails: chris.townsend@newcastle.edu.au, maria.seron@newcastle.edu.au}
	}
\begin{document}

\maketitle

\begin{abstract}
	We characterise optimality of bolus insulin inputs, to the Bergman minimal model, by the predicted behaviour of the plasma glucose concentration for a given disturbance. The result is derived subject to the constraints that the plasma glucose concentration must attain but not go below a specified minimum value and the bolus input is rectangular. We give numerical examples of the results for the Hovorka model.
\end{abstract}

\section{Introduction}

Type one diabetes is a chronic disease affecting over thirty-eight million people \cite{wyou16}. Diabetics, typically, require the subcutaneuous administration of insulin to minimise plasma glucose concentrations whilst avoiding hypoglycaemia. Current treatment is invasive and often provides poor control.  Hence, much recent effort has been devoted to developing an artificial pancreas \cite{harv10} to automate treatment and better control plasma glucose concentrations.  


Understanding and modelling the dynamics of glucose regulation assists the development of such systems and further treatment improvements. A number of models of glucose regulation have been proposed (\cite{makr06, wili09, colm14}). Each is typically comprised of sub-systems describing different physiological processes such as insulin kinetics and glucose absorption.


Recently, research has focused on comprehensive models of glucose dynamics which are generally preferred to test treatment policies and control algorithms, for example \cite{man14}. Typically, these models are high order dynamic system with many parameters to ensure robustness to inter-individual variability. However, simpler models are useful to establish general theoretical properties that would otherwise be difficult to investigate analytically. Indeed, most models of glucose dynamics share certain analytic properties -- such as positivity the of the plasma glucose. Thus analytic results obtained for simpler models can give insights into the behaviour of more comprehensive models.


We focus here on the \emph{Bergman (Khandarian) Minimal Model} (\cite{berg05, good15, kand09}) which is a simplified model of glucose metabolism frequently used for virtual patient simulations and as the basis of more comprehensive models such as the Fabietti model (\cite{fabi06}) and the extensions of \cite{roy06} and \cite{roy07}.  The model \eqref{eq:eqs} is a non-linear continuous-time model comprising a set of first order linear ordinary differential equations which govern the subcutaneous, plasma and interstitial concentrations and effectiveness of insulin and a non-linear ordinary differential equation which governs the plasma glucose concentration $g(t)$:
\begin{align}
\label{eq:eqs}
\begin{split}
  \dot z &= -d z + dk u \\
  \dot y &= -c y + cz \\
  \dot x &= -ax + aby\\
  \dot g &= - hg + w
 \end{split}
\end{align}%
where all variables and constants are positive and $u(t)$ is the input function. The functions $h$ and $w$ in \eqref{eq:eqs} are:
\begin{align}
\label{eq:wandh}
\begin{split}
  h = x + G\\
  w = r + E
\end{split}
\end{align}%
where the function $r$ is a given bounded function. Specifically, the terms in \eqref{eq:eqs} and \eqref{eq:wandh} represent:

\begin{itemize}
 \item{$u(t), z(t), y(t)$ and $x(t)$ -- the delivery, subcutaneuos concentration, plasma concentration and insulin effectiveness, respectively. }
 \item{$c$ and $d$ -- inverse time constants.}
 \item{$a, b$ and $k$ -- the insulin motility \cite{roy07}, insulin sensitivity and the clearance rate.}
 \item{$g(t)$ -- the plasma glucose concentration.}
 \item{$E$ and $G$ -- the endogenous glucose production and the effect of glucose on the uptake of plasma glucose and suppression of endogenous glucose production.}
 \item{$r(t)$ -- the glucose absorption from meals.}
\end{itemize}

Physiological values for the above are derived from \cite{kand09} and given in Table 1 of \cite{good15}. 

We contribute to the theoretical understanding of this model by characterising the magnitude, delivery time and duration of insulin bolus inputs that are optimal in the sense that they give the lowest maximum glucose concentration whilst avoiding hypoglycaemia, see \Cref{defn:lambda,defn:gamma,defn:glob}. Specifically, we impose a fixed constraint on the minimum glucose concentration and focus on lowering the maximum glucose concentration.  We show that this fixed constraint induces a fundamental limitation on the controllability of the maximum glucose concentration when the control input is a pulse. We constrain the minimum glucose concentration because the risks associated with hypoglycaemia are, generally, far greater than those associated with hyperglycaemia. To ensure robustness against uncertainties this constraint could be set above the hypoglycaemic threshold.

The effect of a fixed constraint on the control of plasma glucose concentrations has been investigated in \cite{good15}. The authors consider a discretised non-linear model, derived from the Bergman model,  which they use to derive a non-linear insulin bolus dosing algorithm. However, the bolus is constrained to be an impulse applied contemporaneously with an impulsive food input. This is a specific example of the cases considered here. For instance, in \cite{good15}, the duration $\tau = 0$ is fixed and $w$ is assumed to be the response of a second order system to a single impulse.

In \cite{town17} a pulse input $u(t)$ of fixed duration was shown to be optimal, i.e. it minimised the global maximum glucose concentration, if and only if either the fixed minimum glucose concentration occured between two global maxima of $g(t)$ or the global maximum occured between two fixed minima of $g(t)$. Here, we present the counterpart of these results by giving conditions on inputs of varying durations but fixed delivery time to minimise the global maximum glucose concentration. Furthermore, our main contribution is to generalise the results to pulse inputs of any duration and delivery time. This fully characterises optimality of arbitrary pulse inputs in the sense of minimising the maximum glucose concentration subject to a fixed constraint on the minimum glucose concentration.

The observations of this work are that: firstly, distinct optimal inputs -- in the sense of \cite{town17}, see \Cref{defn:lambda,defn:gamma} --  must intersect at least twice if one has a lower global maximum glucose concentration and, secondly, that pulse inputs of varying duration can intersect at most twice and will only do so if one input is \emph{nested} inside the other. Our results confirm the intuition that responses with a maximum between two minima result from longer pulses than responses with a minimum between two maxima. Finally, decreasing the duration for the first type of response or increasing the duration the second type of response will lower the global maximum. As $g$ is a continuous function of the duration the lengthening and shortening of the duration converges.
   

	The presented results reveal a fundamental limit on the controllability of the plasma glucose concentration achievable from a bolus input to the Bergman minimal model and allow the optimality, of the input, to be determined from the shape of the glucose response. They also specify the effect of changes to the parameters of a bolus input on the maximum plasma glucose concentration. This may, for example, act as metric for the optimality of control algorithms designed for artificial pancreas systems and assist in the determination of bolus guidelines. Regardless of our focus on the Bergman model, other models may be analysed \emph{mutatis mutandis}, see Section~\ref{sec:oth}.

\subsection*{Notation:}

We adopt the following notation throughout: $\overline u$ and $ \hat u$  are  the basal input and the magnitude of the bolus input; $\lambda$ and $\gamma$  are the global minimum glucose concentration and the global maximum glucose concentration; $t', t_{i,\max}, t_{i,\min}$ and $\tau$ are  the delivery time, the $i^{\text{th}}$ time when the glucose concentration is at its global maximum, the $i^{\text{th}}$ time when the glucose concentration is at its minimum and the duration of the interval over which the bolus is delivered;  $u(t, \tau) = u(t,A)$  is  the input $u(t)$ applied over the interval $A := [t', t'+ \tau]$; $g(h(u),w)= g(t, \tau)$  is  the reponse of $g$ to the functions $h$ and $w$, where $h(u)$ is the response of $h$ to the input $u(t,\tau)$; $t_i$ and $t_{g,i}$  are intersection points of the responses $h(u)$ and $h(v)$ for distinct inputs $u \neq v$ and the $i^{\text{th}}$ intersection point of the resulting $g(h(u), w)$ and $g(h(v),w)$ and, lastly,  $\gamma(u)$  is the global maximum of $g(h(u), w)$.

\section{Assumptions and Preliminaries}
\label{sec:prelim}
Regardless of the nominal defintions given above, we do not require $r$ to be a positive bounded function corresponding to the glucose absorption from meals nor $E$ to be the endogenous glucose production. Instead, we require that $w$ is a positive function bounded below by any positive real $\overline E \geq G$. This allows, for example, $r$ to be negative if $\overline E < E$. By abuse of notation we denote $\overline E$ by $E$.

Throughout we impose the following initial conditions: $z(0) = y(0) = k u(0)$, $x(0) = bk u(0)$ and $g(0) > 0$. We assume the function $w$ is positive and bounded. We also assume the input $u$ is positive and bounded and  of the form:
\begin{equation}
	\label{eq:u}
	  u(t, A) = \bar{u}+ \hat{u} \chi_A (t)
\end{equation}
where the constant $\bar u$ is the \emph{basal} input, $\hat u$ is the magnitude of the \emph{bolus} input applied at some time $t'$, known as the \emph{delivery time} and $\chi_A$ is the characteristic function of $A$. The bolus input is held constant over $A=[t',t' + \tau]$, where $\tau \in \mathbb{R}_+$. When $\tau = 0$ we define $u(t) := \hat u \delta_{t'} (t)$, where $\delta(t)$ is the Kronecker delta. The boundedness and positivity of $u(t)$ imply that $h$, given by \eqref{eq:eqs} and \eqref{eq:wandh}, is a continuous, positive and bounded function. We desire that there exist $\lambda > 0$ such that $g(t) \geq \lambda$ for all $t$. This is achieved if $\lambda$ is a global minimum of $g(t)$. We denote by $t_{min} \in \mathbb{R}_+$ a point such that $g(t_{min}) = \lambda$.  


Finally, unless otherwise stated we assume that $t_{\max} := \argmax_t {g(t)} < \infty$. The maximal time $t_{\max}$ exists if $w$ is assumed to vanish to its lower bound at infinity. \Cref{thm:port} summarises a number of useful results from \cite{town17}.  

  \begin{defn}[Steady-State]
	  The \emph{steady-state} of $g$ is $g(\infty):=\lim_{t \to \infty} g(t)$, when $\lim_{t \to \infty}u(t)  = \overline u$ and $\lim_{t \to \infty} w(t) = E$ i.e. it is the limit of the response of $g(t)$ when the only input is the constant input $\overline u$.
  \end{defn}

\begin{thm}[Portmanteau]
	\label{thm:port}
Suppose $h$ and $w$ are bounded positive real-valued functionals, $g$ is as in \eqref{eq:eqs}, $u(t)$ is as in \eqref{eq:u} and choose $\lambda \leq g(0)$ and $\tau \geq 0$. Then:
\begin{enumerate}
	\item{Under the assumed initial conditions, $x(\overline u) = bk\overline u$ for all $t$. Furthermore, $\lim_{t \to \infty} x(t) = bk \overline u$.}
  
  \item{$g(t)$ is a strictly monotone function of $u(t,\tau)$.}

  \item{Setting:
  \begin{align}
	  \label{eq:steady}
	  \overline u = \frac{1}{kb}\l (\frac{E}{g(0)} - G \r)
  \end{align}%
  gives $g(\infty) = g(0)$.}%
  \end{enumerate}
  \end{thm}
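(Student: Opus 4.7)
The plan is to prove the three claims in turn, leveraging the cascade structure of \eqref{eq:eqs}: the insulin subsystem $(z,y,x)$ is linear in $u$ and enters the glucose equation only through $h=x+G$.

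For Part 1, I would observe that when $u(t)\equiv\overline u$ the triple $(k\overline u,k\overline u,bk\overline u)$ is an equilibrium of the first three ODEs of \eqref{eq:eqs}, and since the assumed initial conditions $z(0)=y(0)=k\overline u$, $x(0)=bk\overline u$ sit exactly on this equilibrium, uniqueness of solutions to the linear IVP forces $x(t)=bk\overline u$ for all $t$. For the asymptotic statement, positivity of the inverse time-constants $d,c,a$ makes each stage of the cascade asymptotically stable, so $u(t)\to\overline u$ propagates via the variation-of-constants formula through $z$ and $y$ to $x$, yielding $\lim_{t\to\infty}x(t)=bk\overline u$.

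For Part 2, I would take inputs $u_1\geq u_2$ with $u_1\neq u_2$ on a set of positive measure and chain monotonicity up the cascade. Non-negativity of the impulse responses $e^{-ds}$, $e^{-cs}$, $e^{-as}$, together with the variation-of-constants representation, gives $z_1\geq z_2$, then $y_1\geq y_2$, then $x_1\geq x_2$, with the strict inequality preserved on an interval of positive length after the three smoothings; hence $h_1\geq h_2$ with strict inequality on such an interval. For the scalar equation $\dot g=-hg+w$, which is linear in $g$ for fixed $g(0)>0$, the explicit integrating-factor solution
\begin{equation*}
g(t) \;=\; g(0)\,e^{-\int_0^t h(s)\,ds} + \int_0^t e^{-\int_s^t h(\sigma)\,d\sigma}\,w(s)\,ds
\end{equation*}
makes strict decreasing dependence on $h$, and therefore on $u$, transparent.

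For Part 3, I would compute the steady state directly. When $\lim_{t\to\infty}u(t)=\overline u$ and $\lim_{t\to\infty}w(t)=E$, Part 1 yields $\lim_{t\to\infty}x(t)=bk\overline u$, and substituting the stipulated $\overline u$ gives $\lim_{t\to\infty}h(t)=bk\overline u+G=E/g(0)$. The glucose equation $\dot g=-h(t)g+w(t)$ with $h(t)$ bounded below by $G>0$ is exponentially stable, so $g(t)$ tracks the quasi-equilibrium $w(t)/h(t)\to E/(E/g(0))=g(0)$, giving $g(\infty)=g(0)$. The main technical care I expect is in Part 2, namely ensuring that the almost-everywhere inequality $u_1\geq u_2$ produces a strict pointwise inequality $h_1(t)>h_2(t)$ on a non-trivial interval; this relies on the smoothing effect of the three convolutions with strictly positive exponential kernels, which turns any positive-measure input gap into a positive-length output gap.
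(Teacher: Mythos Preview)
Your argument is sound: the equilibrium/uniqueness observation for Part~1, the variation-of-constants monotonicity chain for Part~2, and the direct steady-state computation for Part~3 are all correct, and your care about turning a measure-zero input difference into a positive-length inequality on $h$ via the strictly positive exponential kernels is exactly the right technical point.

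As for comparison: the paper does not prove \Cref{thm:port} at all. It is presented as a portmanteau summarising results established in \cite{town17}, so there is no in-paper argument to compare against. Your proposal therefore supplies a self-contained proof where the paper offers only a citation; the route you take (explicit integrating-factor formula for $g$ and cascade stability for the insulin chain) is the natural one and almost certainly mirrors what \cite{town17} does, but strictly speaking you are filling a gap the present paper leaves to an external reference.
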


%
\begin{defn}[Proper Input]
	For some $\lambda \leq g(0)$, an input function, $u(t, \tau)$, is \emph{proper}, if there exists $t_{\min}$ such that $g(h(u(t_{\min})),w) = \lambda$ and $g(t) \geq \lambda$ for all $t$.
\end{defn}  
  Theorem \ref{thm:bolus} \emph{(Theorem 7 \cite{town17})} proves the existence of a bolus input delivered at any $t'$ and $\tau$ which achieves a specified minimum $\lambda > 0$ and thus proves the existence of proper inputs of the form \eqref{eq:u}.   

\begin{thm}[Insulin Bolus, (Theorem 7  \cite{town17})]
\label{thm:bolus}
Suppose $u(t)$ is of the form \eqref{eq:u}. Fix $\tau$ and  $t'$ -- the input time i.e. $A:= [t', t'+ \tau]$, choose $\lambda \in (0, g(t')]$ and suppose $\bar u$ is as in \eqref{eq:steady}. Then there exists $\hat u$ such that $u(t)$ is proper.
\end{thm}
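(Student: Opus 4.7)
The plan is to treat the bolus magnitude $\hat u$ as a scalar parameter and apply an intermediate value argument to the functional $\phi(\hat u) := \inf_{t \geq 0} g(t)$, where $g(t)$ denotes the glucose response produced by the input \eqref{eq:u} with $\bar u$, $t'$ and $\tau$ held at the values in the statement. The goal is to locate some $\hat u^\star \geq 0$ such that $\phi(\hat u^\star) = \lambda$; since $g$ will then be bounded below by $\lambda$ and attain $\lambda$ at some $t_{\min}$, the resulting $u(t, A)$ is automatically proper.

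The first step is a pair of endpoint estimates. At $\hat u = 0$ the assumed initial conditions and the cascade structure of \eqref{eq:eqs} give $z \equiv y \equiv k\bar u$ and, by \Cref{thm:port}(1), $x \equiv bk\bar u$, so using \eqref{eq:steady} one obtains $h \equiv E/g(0)$. The residual scalar equation $\dot g = -(E/g(0))\, g + w$ combined with $w \geq E$ forbids $g$ from ever dropping below $g(0)$, because at any instant where $g = g(0)$ one has $\dot g = -E + w \geq 0$. Hence $\phi(0) \geq g(0) \geq \lambda$. For the other endpoint, increasing $\hat u$ drives $z$, $y$ and in turn $x$ arbitrarily large on $A$ and on a right neighbourhood of $A$; a standard Gronwall-type bound applied to $\dot g = -hg + w$ with $w$ uniformly bounded above then forces $g$ arbitrarily close to zero on that neighbourhood, so $\phi(\hat u) < \lambda$ for all sufficiently large $\hat u$.

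The second step is to show $\phi$ is continuous on $[0,\infty)$. Continuous dependence of the ODE \eqref{eq:eqs} on the parameter $\hat u$ makes $g$ jointly continuous in $t$ and $\hat u$ on any compact time-interval $[0,T]$. Since the bolus is supported on the finite interval $A$, after $A$ ends the input returns to the constant $\bar u$, and \Cref{thm:port}(3) together with $w(t) \to E$ at infinity yields $g(t) \to g(0)$ as $t \to \infty$. Consequently, on any compact range of $\hat u$ on which $\phi(\hat u) < g(0)$, the infimum is attained at a time $t_{\min}$ lying in a uniformly bounded interval, so $\phi$ coincides locally with the minimum of a jointly continuous function over a fixed compact time-interval and is therefore continuous.

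I expect the main obstacle to be this continuity step, since a priori the minimiser $t_{\min}$ depends on $\hat u$ and could migrate to infinity, breaking the compactness argument. The tail estimate $g(t) \to g(0) \geq \lambda$ is what confines $t_{\min}$ to a bounded set whenever $\phi(\hat u) < g(0)$, and the strict monotonicity in \Cref{thm:port}(2) ensures $\phi$ is in fact strictly decreasing, giving uniqueness of $\hat u^\star$ as a bonus. Once continuity is in hand, the intermediate value theorem applied between $\hat u = 0$ and a sufficiently large $\hat u$ delivers the required $\hat u^\star$ with $\phi(\hat u^\star) = \lambda$; the associated $u(t, A)$ is then a proper input, completing the proof.
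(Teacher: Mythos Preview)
The paper does not prove this theorem in the text; it is imported from \cite{town17} (as Theorem~7 there) and stated without argument, serving only to guarantee that proper inputs exist. There is consequently no in-paper proof to compare your proposal against.

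That said, your intermediate-value argument on $\phi(\hat u):=\inf_{t}g(t)$ is the natural route and leans precisely on the tools the paper has already recorded for this purpose: monotonicity of $g$ in the input (\Cref{thm:port}, Part~2) for strict decrease of $\phi$, the steady-state identification (\Cref{thm:port}, Parts~1 and~3) together with $w\ge E$ for the $\hat u=0$ endpoint, and continuous dependence on parameters plus the tail estimate $g(t)\to g(0)$ to confine $t_{\min}$ to a compact set and obtain continuity of $\phi$. The argument is sound. One small wrinkle worth noting: the hypothesis here is $\lambda\le g(t')$, whereas the paper's definition of \emph{proper} and \Cref{thm:port} both stipulate $\lambda\le g(0)$; your barrier argument shows $g\equiv g(0)$ is a lower barrier on $[0,t']$ under the basal input, so $g(t')\ge g(0)$ and the range $\lambda\in(g(0),g(t')]$ is vacuous for the purposes of properness (since $g(0)<\lambda$ would already violate $g\ge\lambda$). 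In the regime $\lambda\le g(0)$ actually used throughout the paper your proof goes through as written.
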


\section{Optimal Duration}

Two necessary and sufficient conditions were given in \cite{town17} for the response $g(t)$ to an input of the form \eqref{eq:u} with a fixed duration $\tau$ to be optimal.\footnote{As in \cite{town17} we say an input is \emph{optimal} if it results in the lowest maximum of $g(t)$ for all inputs of the same duration.} These conditions are summarised in Definitions \ref{defn:lambda} and \ref{defn:gamma}. 

\begin{defn}[$\lambda$--optimal]
	\label{defn:lambda}
	An optimal input $u$ is $\lambda$--optimal if the global maximum of $g(u)$ occurs between two minima.
\end{defn}

\begin{defn}[$\gamma$--optimal]
	\label{defn:gamma}
	An optimal input $u$ is $\gamma$--optimal if all minima of $g(u)$ occur between two global maxima.
\end{defn}

	For a fixed $w$, if the input $u$ is $\lambda$--optimal, respectively if $u$ is $\gamma$--optimal we say the response $g(u)$ is $\lambda$--optimal, respectively $\gamma$--optimal. We extend the results of \cite{town17} to inputs of the form \eqref{eq:u} which may have any duration $\tau$. In \Cref{defn:glob} we define global optimality of an input. In this section, firstly, we consider global optimality over the class of $\lambda$--optimal inputs and secondly global optimality over the class of $\gamma$--optimal inputs. Finally, we characterise global optimality over all proper inputs of the form \eqref{eq:u}. The following defines some useful notation.  

\begin{defn}
	Let $\gamma(u,w) := \max_t \{g(u(t),w(t))\}$. This is denoted $\gamma(u)$ for fixed $w$.
\end{defn} 

\begin{defn}[Globally Optimal]
	\label{defn:glob}
	An input $u$ is \emph{globally optimal} if $\gamma(u) < \gamma(v)$ for all $v \neq u$.
\end{defn}

Lemma \ref{lem:phis} specifies the maximum number of intersection points of responses to distinct inputs of the form \eqref{eq:u}.  

\begin{defn}[Nested Inputs]
	Two inputs $u$ and $v$ are \emph{nested} if $A \subset B$, where $A$ and $B$ are the intervals over which the boluses $\hat u$ and $\hat v$ are applied.
\end{defn}

Throughout we adopt the convention that for two inputs $u$ and $v$ times related to $u$ are denoted by $t$ and times related to $v$ are denoted by $s$.  

\begin{lem}
	\label{lem:phis}
	Suppose $u$ and $v$ are distinct inputs with delivery times $t'$ and $s'$ respectively. Then, for each solution $\phi \in \{x(t), y(t), z(t), h(t), g(t)\}$ to \eqref{eq:eqs}--\eqref{eq:wandh}, there are at most two $t_i > \min\{t', s'\}$ such that $\phi(u, t_i) = \phi(v,t_i)$ and these $t_i$ are distinct for all $\phi$ only if $u$ and $v$ are nested.
\end{lem}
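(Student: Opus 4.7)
The plan is to exploit the cascaded structure of \eqref{eq:eqs}: each difference $\Delta \phi := \phi(u) - \phi(v)$ satisfies a scalar linear ODE whose driving term is the preceding difference, with $\Delta z$ driven by $u-v$ and $\Delta g$ driven by $\Delta h$ (the $g$-equation is linear in $g$ because $h$ does not depend on $g$). Because $u$ and $v$ share the basal $\bar u$ and coincide on $[0,t_0]$ with $t_0 := \min\{t', s'\}$, every $\Delta \phi$ vanishes at $t_0$. The common tool is variation of parameters combined with a Rolle-type zero count: for $\dot{\Delta\phi} + \alpha(t)\Delta\phi = f(t)$ with $\alpha > 0$ and $\Delta\phi(t_0) = 0$, setting $A(t) = \int_{t_0}^t \alpha(s)\,ds$ gives $e^{A(t)}\Delta\phi(t) = \int_{t_0}^t e^{A(s)} f(s)\,ds$, so the zeros of $\Delta\phi$ on $(t_0,\infty)$ coincide with zeros of an antiderivative whose slope has the sign of $f$, and hence are bounded by the number of sign reversals of $f$ on $(t_0, \infty)$.

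The first step is to analyse $\Delta z$ directly. The template gives $\Delta z(t) = dk e^{-dt} F(t)$ with $F(t) = \int_{t_0}^t e^{ds}(u-v)(s)\,ds$. The step function $u-v$ is piecewise constant, with sign pattern determined by the geometry of $A = [t', t'+\tau]$ and $B = [s', s'+\sigma]$. I split into three cases. If $A$ and $B$ are disjoint, or partially overlap, then $u-v$ has a single sign reversal (from $+\hat u$ on the $u$-only piece, possibly through $\hat u - \hat v$ on an overlap, to $-\hat v$ on the $v$-only piece), so $F$ is unimodal on $(t_0, \infty)$ and permits at most one zero there. If the pulses are nested, without loss of generality $s' \le t'$ and $t' + \tau \le s' + \sigma$, then $u - v$ assumes the values $-\hat v$, $\hat u - \hat v$, $-\hat v$ in succession; this has two sign reversals exactly when $\hat u > \hat v$, in which sub-case $F$ exhibits a ``down-up-down'' profile admitting up to two zeros on $(t_0, \infty)$, while otherwise $F$ stays non-positive. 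This proves the bound for $\phi = z$ and shows that the bound of two is attainable only under nesting.

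The second step is to cascade the bound up the chain. The equations $\dot{\Delta y} + c\Delta y = c\Delta z$ and $\dot{\Delta x} + a\Delta x = ab\Delta y$ both fit the integrating-factor template; each pass turns ``at most $n$ sign reversals of the driving term'' into ``at most $n$ zeros of the output'' on $(t_0, \infty)$, since the output starts at $0$ at $t_0$ and its monotone segments are in bijection with the sign intervals of the input. Thus $\Delta y$, $\Delta x$, and $\Delta h = \Delta x$ all inherit the dichotomy ``at most one zero / at most two zeros, the latter only under nesting''. For $\Delta g$, subtracting the $g$-equations yields $\dot{\Delta g} + h(u)\Delta g = -\Delta h \cdot g(v)$; the coefficient $h(u)$ is positive though time-varying, and $g(v) > 0$ throughout, so the sign of the driving term matches that of $\Delta h$. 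The integrating factor $e^{H(t)}$ with $H(t) = \int_{t_0}^t h(u)(s)\,ds$ completes the propagation.

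The main obstacle is the careful bookkeeping in the nested sub-case $\hat u > \hat v$: one must verify that the ``down-up-down'' profile of $F$ is not smoothed away under any cascade step, and conversely that in non-nested configurations no cascade step can inflate a single sign reversal into two further downstream. A minor technical nuisance is the discontinuity of $u - v$ at the endpoints of $A$ and $B$, but this is harmless since the Rolle argument is applied to the continuous antiderivatives $F, G, \ldots$, whose monotone segments correspond precisely to the sign intervals of the step function.
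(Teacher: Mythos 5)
Your proposal is correct and follows essentially the same route as the paper's (much terser) proof: bound the sign changes of $u-v$ using the rectangular pulse geometry, deduce that $z(u)-z(v)$ has at most two zeros (two only under nesting), and propagate the count up the cascade to $y$, $x$, $h$ and $g$. Your version simply makes explicit the integrating-factor and Rolle-type zero-counting machinery, and the linearisation $\dot{\Delta g} + h(u)\Delta g = -\Delta h\, g(v)$, that the paper compresses into ``we proceed similarly for all solutions $\phi$.''
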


\begin{proof}
	Observe, for $t > \min\{t', s'\}$ that $z(u,t) = z(v,t)$ only if $u-v$ changes sign. As $u-\overline u$ and $v-\overline u$ are rectangular $u-v$ can change sign at most twice. Implying that $z(u,t) = z(v,t)$ at most twice and thus $z(u,t) - z(v,t)$ may change sign at most twice. We proceed similarly for all solutions, $\phi$.
\end{proof}

\subsection{$\lambda$--Optimal Inputs}

\begin{lem}
	\label{lem:lambda}
	Suppose $g(t, \tau)$ and $g(t, \sigma)$ are the respective responses to the distinct $\lambda$--optimal inputs $u(t)$ and $v(t)$ with durations $\tau$ and $\sigma$, respectively. Then $\gamma(u) > \gamma(v)$ if and only if $[s', s' + \sigma] \subset [t', t' + \tau]$, where $t'$ and  $s'$ are the respective delivery times of the inputs $u(t)$ and $v(t)$.
\end{lem}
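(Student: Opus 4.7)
The plan is to leverage \Cref{lem:phis} together with the two--minima structure afforded by $\lambda$--optimality. Write $\Delta(t) := g(u,t) - g(v,t)$. I will prove the forward implication in detail; the reverse implication follows by contrapositive, by applying the forward implication to the swapped pair $(v,u)$ together with a short argument (based on \Cref{thm:port} and the two--intersection budget of \Cref{lem:phis}) that rules out $\gamma(u)=\gamma(v)$ for distinct $\lambda$--optimal nested inputs.

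\emph{Forward direction, two intersections.} Assume $\gamma(u) > \gamma(v)$. Since $u$ is $\lambda$--optimal, $g(u)$ attains $\gamma(u)$ at some $t_{u,\max}$ strictly between two minima $t_{u,\min,1} < t_{u,\min,2}$ at level $\lambda$. Then $\Delta(t_{u,\max}) \ge \gamma(u) - \gamma(v) > 0$, whereas $\Delta(t_{u,\min,i}) = \lambda - g(v,t_{u,\min,i}) \le 0$ for $i=1,2$ because $v$ is proper. Hence $\Delta$ changes sign at least twice, and by \Cref{lem:phis} exactly twice.

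\emph{Nesting and orientation.} Revisiting the proof of \Cref{lem:phis}, a short case analysis of the rectangular-pulse configurations of $u-\bar u$ and $v-\bar u$ shows that $u-v$ admits two sign changes on $(\min\{t',s'\},\infty)$ only when the pulse intervals are strictly nested; every disjoint, partially overlapping, or boundary-touching configuration contributes at most one sign change, which propagates through the integral representation of $z(u)-z(v)$ and successively through $y$, $x$, $h$, $g$ to at most one zero of $\Delta$ -- contradicting the previous step. Hence $u,v$ are strictly nested. To select the correct orientation, I examine the sign of $\Delta$ immediately after $\min\{t',s'\}$: in the configuration $[s',s'+\sigma]\subset[t',t'+\tau]$ (which forces $\hat u<\hat v$), only $u$'s bolus is active just after $t'$, giving $\Delta<0$ initially; the resulting three-region sign pattern of $\Delta$ is therefore $(-,+,-)$, placing $t_{u,\max}$ in the middle region where $\Delta>0$ and the minima of $g(u)$ in the outer regions, consistent with \Cref{defn:lambda}. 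The opposite configuration $[t',t'+\tau]\subset[s',s'+\sigma]$ produces the pattern $(+,-,+)$, which would force both minima of $g(u)$ into the single middle region and so fail to flank $t_{u,\max}$, contradicting $\lambda$--optimality. This forces $[s',s'+\sigma]\subset[t',t'+\tau]$.

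\emph{Main obstacle.} I expect the most delicate step to be the sign-enumeration underpinning the nesting conclusion: while the two-sign-changes-iff-strictly-nested statement is clean, propagating a given count of sign changes of $u-v$ through the chain of integral equations for $z$, $y$, $x$, $h$, $g$ in each non-nested and boundary-touching configuration requires attention to sign preservation at the pulse endpoints and to the shared initial conditions $z(u,0)=z(v,0)=k\bar u$, $x(u,0)=x(v,0)=bk\bar u$. The equality-case dismissal $\gamma(u)=\gamma(v)$ in the reverse direction is a close second: it requires showing that a shared maximum value would force more than two intersections of $\Delta$ (via the four $\lambda$--minima of $g(u),g(v)$ and the two distinct maximising times when $t_{u,\max}\neq t_{v,\max}$, or a tangential coincidence ruled out by \Cref{thm:port} when $t_{u,\max}=t_{v,\max}$), again contradicting \Cref{lem:phis}.
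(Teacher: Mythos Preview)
Your forward direction is essentially the paper's argument recast in the language of the sign pattern of $\Delta$: the paper also observes $\Delta>0$ at $t_{u,\max}$ and $\Delta\le 0$ at $t_{u,\min,i}$, then reads off the initial sign of $\Delta$ to conclude $t'<s'$, and finally invokes \Cref{lem:phis} to exclude the partially overlapping case. Your explicit $(-,+,-)$ versus $(+,-,+)$ orientation check is a clean repackaging of that step. One small point: the parenthetical ``which forces $\hat u<\hat v$'' is neither justified nor used, so drop it.

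The genuine difference is in the reverse direction. The paper assumes $[s',s'+\sigma]\subset[t',t'+\tau]$ and argues \emph{directly} that $\gamma(u)>\gamma(v)$: from the nesting it gets $g(u)<g(v)$ on $(t',t_{g,1})$, then uses the two--intersection budget and $\lambda$--optimality of $v$ to force $[s_{1,\min},s_{2,\min}]\subseteq[t_{1,\min},t_{2,\min}]$ and hence the strict inequality of maxima. Your contrapositive-plus-swap route is valid but manufactures an extra obligation, namely ruling out $\gamma(u)=\gamma(v)$ for distinct nested $\lambda$--optimal inputs. Your sketch for this (``four $\lambda$--minima and two maximising times force more than two zeros of $\Delta$'') is in the right spirit but is not yet a proof: with $\gamma(u)=\gamma(v)$ the key inequalities $\Delta(t_{u,\max})\ge 0$ and $\Delta(t_{v,\max})\le 0$ are only weak, so one has to chase the degenerate cases where a maximiser sits exactly at a zero of $\Delta$ and argue, for instance via \Cref{lem:ys}/\Cref{cor:ys}, that this still overspends the intersection budget. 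That can be done, but the paper's direct argument avoids the issue entirely and is shorter; I would recommend you adopt it for the reverse implication.
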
 

\begin{proof}
	As $u$ and $v$ are pulse inputs there are at most two points $t_i$ at which the response $h(u)$ and $h(v)$ intersect, and similarly at most two $t_{g,i}$. We denote these $t_{i,1}$ and $t_{i,2}$, similarly $t_{g,1}$ and $t_{g,2}$. 
	
	Suppose $\gamma(u) > \gamma(v)$. Then $g(v) < g(u)$ for all $t$ in some interval $I \ni t_{1,\max}$. Additionally, $g(v) \geq g(u)$ at both $t_{1, \min}$ and $t_{2,\min}$. Thus $g(v) > g(u)$ for all $t \in (\min\{s', t'\}, t_{g,1})$, where $t_{g,1} \geq t_{1,\min}$. Otherwise there would exist more than two intersection points or $v$ would be non-proper. Therefore, $t' < s'$. Lastly, observe that should $s' + \sigma > t' + \tau$ then by \Cref{lem:phis} there is at most one $t_i > t'$ such that $h(u) = h(v)$ which implies that either $u$ or $v$ is not $\lambda$--optimal.


	Suppose instead that $[s', s' + \sigma] \subset [t', t' + \tau]$. As both $u$ and $v$ are $\lambda$--optimal, then by the above, $[s_{1,\min}, s_{2,\min}] \subseteq [t_{1,\min}, t_{2,\min}]$ or the converse. By assumption on the inputs we have that $g(u) < g(v)$ for all $t \in (t', t_{g,1})$. If $s_{1,\min} < t_{1,\min}$. Then we have that $t_{g,1} < t_{g,2} \leq t_{1,\min}$. Thus $g(v)$ would not be $\lambda$--optimal. As $g(v,t_{1,\max)} > \gamma(u)$ and $g(v) > \lambda$ for all $t > t_{g,2}$ which must occur before $t_{1,\min}$. Thus  $[s_{1,\min}, s_{2,\min}] \subseteq [t_{1,\min}, t_{2,\min}]$. Finally, as $g(v)$ and $g(u)$ intersect at most twice we have that $\gamma(u) > \gamma(v)$.
\end{proof} 
%


\begin{cor}
	Suppose $g(v)$ is $\lambda$--optimal for all $\tau > 0$. Then $u$ is globally optimal if and only if $A$ is a singleton.
\end{cor}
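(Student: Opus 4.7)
The plan is to deduce this corollary directly from \Cref{lem:lambda}. Because this section fixes the delivery time $t'$, any two inputs $u$ and $v$ of respective durations $\tau$ and $\sigma$ have intervals $A = [t', t' + \tau]$ and $B = [t', t' + \sigma]$ which are automatically nested: whichever duration is shorter sits inside the other. Coupled with the standing hypothesis that $g(v)$ is $\lambda$-optimal for every $\tau > 0$, this turns \Cref{lem:lambda} into a strict monotonicity statement: among $\lambda$-optimal inputs of this form, $\gamma$ is a strictly increasing function of the duration.

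For the forward direction I would proceed by contrapositive. If $A$ is not a singleton then $\tau > 0$, so I may pick any $\sigma \in (0, \tau)$ together with the associated $\lambda$-optimal input $v$ furnished by the hypothesis. Since $B \subsetneq A$, \Cref{lem:lambda} yields $\gamma(v) < \gamma(u)$, showing that $u$ fails to be globally optimal in the sense of \Cref{defn:glob}.

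For the reverse direction I would take $A = \{t'\}$, so $u$ is an impulse of duration $\tau = 0$. For any distinct $v$ of duration $\sigma > 0$ the hypothesis supplies $\lambda$-optimality of $g(v)$, and we have the nesting $\{t'\} = A \subsetneq B = [t', t' + \sigma]$. Applying \Cref{lem:lambda} with the roles of $u$ and $v$ interchanged gives $\gamma(u) < \gamma(v)$. Because this holds for every $v \neq u$, \Cref{defn:glob} certifies $u$ as globally optimal.

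The only delicate point, and the one I expect to be the main obstacle, is justifying the use of \Cref{lem:lambda} at the boundary case $\tau = 0$. The engine of that lemma is \Cref{lem:phis}, whose conclusion hinges on the number of sign changes of $u - v$; an impulse paired with a rectangular pulse still produces at most two sign changes, so the intersection counts for $h$ and $g$ carry over unchanged, and the nesting-based characterisation of $\gamma$ remains valid when one of the intervals degenerates to a point. Once this degeneracy is handled, the corollary follows immediately from the monotonicity observation above.
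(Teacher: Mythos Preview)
Your argument rests on a misreading of the paper's structure. You write that ``this section fixes the delivery time $t'$'', and from that you infer that the intervals $A=[t',t'+\tau]$ and $B=[t',t'+\sigma]$ share a left endpoint and are therefore automatically nested. That is not the setting here. The corollary sits in the subsection on $\lambda$--optimal inputs, where \Cref{lem:lambda} explicitly allows the two $\lambda$--optimal inputs to have \emph{different} delivery times $t'$ and $s'$; the fixed-delivery-time situation is treated only later, in the subsection culminating in \Cref{thm:fixed}. So the ``automatic nesting'' you invoke is not available, and the monotonicity-in-$\tau$ step as you have written it does not go through.

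The intended argument still passes through \Cref{lem:lambda}, but the nesting has to be \emph{earned} rather than assumed. The key observation (used inside the proof of \Cref{lem:lambda} via \Cref{lem:phis}) is that two distinct $\lambda$--optimal inputs are necessarily nested: if they were not, \Cref{lem:phis} would give at most one intersection of the responses, contradicting $\lambda$--optimality of at least one of them. Once you know any two $\lambda$--optimal inputs are nested, the one with the shorter duration must sit inside the longer, and the biconditional in \Cref{lem:lambda} then yields strict monotonicity of $\gamma$ in the duration across the family of $\lambda$--optimal inputs. With that in hand your two directions proceed exactly as you describe, including the degenerate $\tau=0$ case; only the justification for nesting needs to be replaced.
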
 

\subsection{$\gamma$--Optimal Inputs}

\begin{lem}
	\label{lem:ys}
	Suppose $u$ and $v$ are distinct inputs for which there exists unique $t_i$ such that $h(u) = h(v)$ and $h(u) > h(v)$ for all $t \neq t_i$ and $t > \min\{ s', t'\}$. Then there are two distinct $p_i$ such that $y(u,p_i) = y(v,p_i)$, where $y(u, t)$ is the response of $y(t)$, from \eqref{eq:eqs} to the input $u$.
\end{lem}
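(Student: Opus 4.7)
The plan is to leverage the linear first-order ODE $\dot x = -ax + aby$ to push the question of zeros of $\Delta_y := y(u) - y(v)$ onto the question of when $\Delta_x := x(u) - x(v)$ vanishes, which is directly controlled by the hypothesis via $h = x + G$. Let $t^* := \min\{t', s'\}$. Before $t^*$ both inputs sit at the basal value $\bar u$, so by \Cref{thm:port} the subsystem states coincide; in particular $\Delta_x(t^*) = \Delta_y(t^*) = 0$. The hypothesis then translates to $\Delta_x(t_1) = 0$ and $\Delta_x > 0$ on $(t^*, \infty) \setminus \{t_1\}$.

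For the first zero of $\Delta_y$, I would subtract the $x$--equations for $u$ and $v$ to obtain $\dot \Delta_x + a\Delta_x = ab\Delta_y$, multiply through by the integrating factor $e^{at}$, and integrate from $t^*$ to $t_1$. Using $\Delta_x(t^*) = \Delta_x(t_1) = 0$ this yields
\begin{equation*}
\int_{t^*}^{t_1} e^{as} \Delta_y(s) \, ds = 0 .
\end{equation*}
Now $\Delta_y \not\equiv 0$ on $(t^*, t_1)$, because if it were then the linear ODE for $\Delta_x$ with $\Delta_x(t^*) = 0$ would force $\Delta_x \equiv 0$ on $(t^*, t_1)$, contradicting the hypothesis that $t_1$ is the unique intersection of $h(u)$ and $h(v)$. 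Since the continuous integrand cannot then be of one sign, the intermediate value theorem supplies a point $p_1 \in (t^*, t_1)$ with $\Delta_y(p_1) = 0$.

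For the second zero I would exploit the geometry at $t_1$: since $\Delta_x > 0$ on either side of $t_1$ and $\Delta_x(t_1) = 0$, the point $t_1$ is a local minimum of the $C^{1}$ function $\Delta_x$, forcing $\dot \Delta_x(t_1) = 0$. Evaluating $\dot \Delta_x + a\Delta_x = ab\Delta_y$ at $t_1$ then immediately gives $\Delta_y(t_1) = 0$, so I take $p_2 := t_1$. Distinctness is automatic from $p_1 < t_1 = p_2$.

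The main obstacle I anticipate is ensuring that the IVT zero $p_1$ is genuinely separated from the boundary point $t^*$, where $\Delta_y$ already vanishes. The integral identity is the right vehicle here, because a continuous function whose weighted integral is zero over an interval must either vanish identically on the interval or strictly change sign in its interior; the previous paragraph rules out the first possibility, forcing $p_1$ into the open interval $(t^*, t_1)$. As a sanity check, the count of exactly two zeros is tight with \Cref{lem:phis}.
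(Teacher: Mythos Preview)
Your argument is correct and follows a route that is close in spirit to the paper's but cleaner in execution. Both proofs work through the difference $\Delta_x := x(u)-x(v)$ and the linear relation $\dot\Delta_x + a\Delta_x = ab\,\Delta_y$, using that $\Delta_x$ is nonnegative with a unique interior zero at $t_1$. The paper proceeds by local sign analysis: near $t_1$ the derivative $\dot\Delta_x$ changes sign, which (since $-a\Delta_x$ does not) forces $\Delta_y$ to change sign in a neighbourhood of $t_1$, producing $p_2$; separately, near $t^*$ one has $\dot\Delta_x>0$ and hence $\Delta_y>0$, so continuity yields a further zero $p_1$ between $t^*$ and $t_1$. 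Your version replaces these two local arguments by one global and one pointwise step: the integrating-factor identity $\int_{t^*}^{t_1} e^{as}\Delta_y(s)\,ds = 0$ immediately gives a sign change of $\Delta_y$ in the open interval $(t^*,t_1)$, and the first-order condition $\dot\Delta_x(t_1)=0$ at the interior minimum pins $p_2=t_1$ exactly rather than merely near $t_1$. What your approach buys is that it sidesteps the somewhat informal claims in the paper that $\dot\Delta_x$ is strictly monotone-signed on one-sided neighbourhoods of $t^*$ and $t_1$; what the paper's approach buys is a slightly more detailed picture of where $\Delta_y$ is positive and negative, which is not needed for the lemma as stated but informs the surrounding discussion.
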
 

\begin{proof}
	Note that $t_i$ must be a minimum of the non-negative function $f(t) := x(u) - x(v)$. Thus there exists $\varepsilon > 0$ such that $x'(u) < x'(v)$ for $t \in (t_i - \varepsilon, t_i)$ and $x'(v) < x'(u)$ for $t \in (t_i , t_i + \varepsilon)$. By assumption $x(v) \leq x(u)$ for all $t$ thus, from \eqref{eq:eqs}, $f'(t)$ can only change sign about $t_i$ if $y(v) - y(u)$ changes sign about $t_i$. Hence, there is some $t_y < t_i$ such that $y(u) < y(v)$ and similarly there is some $s_y > t_i$ such that $y(v) < y(u)$. By continuity of $y(t)$ we see that there is a $p_2 \in (t_y, s_y)$ such that $y(u) = y(v)$. Additionally, $f'(t) > 0$ in some non-empty interval $[\min \{t' , s'\}, \min \{t' , s'\} + \delta)$ as $h(u) > h(v)$, for almost all $t>\min\{t', s'\}$. We have that $y(v) < y(u)$ on this interval. This implies that there must exist $p_1 < t_y$ such that $y(v) = y(u)$, again by continuity.
\end{proof} 

\begin{cor}
	\label{cor:ys}
	Suppose there are at most countably many $t_i$ such that $h(u) = h(v)$ and that $h(u) > h(v)$ for all $t \neq t_i$ and $t> \min\{t',s'\}$. Then for each $t_i$ there are two $p_i$ such that $y(u,p_i) = y(v,p_i)$.
\end{cor}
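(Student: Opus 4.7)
My plan is to exploit \Cref{lem:phis}, which bounds the number of intersection points of $h(u)$ and $h(v)$ by two, so that the hypothesis ``at most countably many $t_i$'' collapses in practice to one or two. The one-intersection case is precisely \Cref{lem:ys}, so only the two-intersection case $t_1<t_2$ requires new work.

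For that case I would show that each $t_i$ is itself a $y$-intersection, whereupon the two $t_i$'s jointly supply the two $p_i$ required for each. Setting $f(t):=h(u,t)-h(v,t)=x(u,t)-x(v,t)$, the hypothesis gives $f\ge 0$ with equality only at the $t_i$, so each $t_i$ is an interior local minimum of the differentiable function $f$, forcing $\dot f(t_i)=0$. Substituting the ODE $\dot x=-ax+aby$ from \eqref{eq:eqs} yields
\begin{equation*}
0=\dot f(t_i)=-af(t_i)+ab(y(u,t_i)-y(v,t_i))=ab(y(u,t_i)-y(v,t_i)),
\end{equation*}
whence $y(u,t_i)=y(v,t_i)$. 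The pair assigned to each $t_i$ is then $\{t_1,t_2\}$, which is consistent with the \Cref{lem:phis} cap of two $y$-intersections overall.

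The hard part will be checking that the local-minimum argument is unambiguous: I must verify that each $t_i$ is genuinely interior, so that differentiability and the first-order condition $\dot f(t_i)=0$ apply. This follows from $t_i>\min\{t',s'\}$ in the hypothesis, together with the fact that $f$ extends smoothly to the left of $\min\{t',s'\}$ (where it vanishes, since $u$ and $v$ coincide with $\bar u$ before their respective delivery times). A secondary concern is confirming that the statement ``two $p_i$ for each $t_i$'' is internally consistent with the two-intersection cap from \Cref{lem:phis}; this holds precisely because in the two-$t_i$ case the two required $p_i$'s per $t_i$ can be identified with the two $t_j$'s themselves.
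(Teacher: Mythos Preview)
Your differentiation argument is sound: $f(t):=x(u,t)-x(v,t)\ge 0$ with $f(t_i)=0$ forces $\dot f(t_i)=0$, and substituting $\dot x=-ax+aby$ gives $y(u,t_i)=y(v,t_i)$. But this produces only \emph{one} $y$-intersection per $t_i$. To reach ``two $p_i$'' in the two-$t_i$ case you recycle the same pair $\{t_1,t_2\}$ for each $t_i$ and declare this ``consistent with the \Cref{lem:phis} cap''. That reading guts the corollary.

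Look at how \Cref{cor:ys} is actually used, in the last paragraph of the proof of \Cref{thm:mainmain}: from two tangential intersections the paper extracts ``at least four distinct points at which $y(u)=y(v)$'', and it is \emph{this} overcount that contradicts \Cref{lem:phis}. So the corollary is meant to deliver $2n$ distinct $y$-intersections from $n$ tangential $h$-intersections; your argument supplies only $n$. Invoking \Cref{lem:phis} up front --- first to cap the $t_i$ at two, then again to cap the $p_i$ at two --- is exactly backwards: the corollary exists to \emph{overrun} that cap and force a contradiction. The phrase ``at most countably many $t_i$'' in the hypothesis is a signal that the statement is not confined to the pulse-input setting where \Cref{lem:phis} applies.

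The missing half of the count is already in the proof of \Cref{lem:ys}, applied locally at each $t_i$. Just before $t_i$ one has $f'<0$ with $f$ small, so $y(u)-y(v)=(f'+af)/(ab)<0$; just after, $f'>0$, so $y(u)-y(v)>0$. Thus $y(u)-y(v)$ passes from negative to positive across $t_i$ (your $t_i$ is precisely that zero). But it must then pass from positive just after $t_i$ back to negative just before $t_{i+1}$, producing an additional zero strictly between consecutive $t_i$'s; and the initial-sign argument of \Cref{lem:ys} furnishes one more zero before $t_1$. That yields $2n$ zeros in all, which is the count the paper actually relies on.
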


\begin{lem}
	\label{lem:gamma}
	Suppose $g(t, \tau)$ and $g(t, \sigma)$ are the respective responses to the $\gamma$--optimal inputs $u(t)$ with duration $\tau$ and $v(t)$ with duration $\sigma$. Then $\gamma(u) > \gamma(v)$ if and only if $[s', s' + \sigma] \supset [t', t' + \tau]$, where $t'$ and $s'$ are the delivery times of the inputs $u(t)$ and $v(t)$, respectively.
\end{lem}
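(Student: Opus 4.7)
The plan is to mirror the proof of \Cref{lem:lambda} with the direction of nesting reversed. Both responses satisfy the two-max/one-min structure demanded by $\gamma$-optimality, so I label the two global maxima of $g(u)$ as $t_{1,\max}<t_{2,\max}$ and the intervening minimum as $t_{1,\min}$, at which $g(u)=\lambda$. The argument combines the intersection bound of \Cref{lem:phis} with a sign-change analysis of the input difference $u-v$.

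For the forward direction, assume $\gamma(u)>\gamma(v)$. Evaluating $g(v)$ at $t_{1,\max}$ and $t_{2,\max}$ gives $g(v)\le\gamma(v)<\gamma(u)=g(u)$, while at $t_{1,\min}$ we have $g(v)\ge\lambda=g(u)$. Continuity then yields two intersection times $t_{g,1}\in(t_{1,\max},t_{1,\min}]$ and $t_{g,2}\in[t_{1,\min},t_{2,\max})$, and by \Cref{lem:phis} these are the only intersections of $g(u)$ and $g(v)$, so $g(v)<g(u)$ on $(\min\{s',t'\},t_{g,1})\cup(t_{g,2},\infty)$. The delivery times are pinned down exactly as in \Cref{lem:lambda}: if $t'<s'$, the extra insulin in $u$ immediately drives $z(u)>z(v)$, which propagates through $y$ and $x$ to give $g(u)<g(v)$ just after $t'$, contradicting $g(v)<g(u)$ on $(t',t_{g,1})$; hence $s'\le t'$. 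For the end times, the piecewise-constant function $u-v$ admits two sign changes only in the nested configuration with $\hat u>\hat v$; the remaining geometries consistent with $s'\le t'$ (partial overlap with $v$ leading, disjoint pulses with $v$ leading, or the reverse nesting $A\supset B$ with $s'=t'$ and $\sigma<\tau$) each yield at most one sign change of $u-v$. By the sign-change cascade underlying \Cref{lem:phis}, this caps the number of intersections of $g(u)$ and $g(v)$ at one, contradicting the two intersections just constructed. Therefore $s'+\sigma\ge t'+\tau$, giving $[s',s'+\sigma]\supset[t',t'+\tau]$.

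For the reverse direction, assume $[s',s'+\sigma]\supset[t',t'+\tau]$. The $\gamma$-optimality of both responses together with the nesting gives $[t_{1,\max},t_{2,\max}]\subseteq[s_{1,\max},s_{2,\max}]$ or the converse. The reverse containment $[s_{1,\max},s_{2,\max}]\subseteq[t_{1,\max},t_{2,\max}]$ is ruled out by an argument analogous to that in \Cref{lem:lambda}: it would force $g(v)<\lambda$ somewhere, violating properness of $v$. Combining $[t_{1,\max},t_{2,\max}]\subseteq[s_{1,\max},s_{2,\max}]$ with the at-most-two-intersection bound from \Cref{lem:phis} then yields $\gamma(u)>\gamma(v)$.

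The hardest step is the end-time analysis in the forward direction. One has to enumerate the three non-nested geometries compatible with $s'\le t'$ and verify in each that $u-v$ has at most one sign change, then invoke the cascade bound of \Cref{lem:phis} to contradict the two $g$-intersections forced by the $\gamma$-optimal structure.
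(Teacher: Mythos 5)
Your forward direction is sound and follows essentially the same route as the paper, in places more explicitly: you force two intersections of $g(u)$ and $g(v)$ by evaluating at $u$'s two global maxima (where $g(v)\le\gamma(v)<\gamma(u)=g(u)$) and at the intervening minimum (where $g(v)\ge\lambda=g(u)$), then use the sign-change cascade behind \Cref{lem:phis} to show that only the strictly nested configuration $[t',t'+\tau]\subsetneq[s',s'+\sigma]$ with $\hat u>\hat v$ supports two sign changes of $u-v$. Two small points there: your enumeration of the ``remaining geometries'' omits the degenerate nestings $s'=t'$ with $\sigma>\tau$ and $s'+\sigma=t'+\tau$ with $s'<t'$ (the paper disposes of these explicitly, and they too give at most one sign change, so your argument extends); and your closing inequality $s'+\sigma\ge t'+\tau$ should be strict, which your own case analysis already delivers.

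The converse, however, has a genuine gap. You assert that $\gamma$--optimality plus nesting of the input intervals forces $[t_{1,\max},t_{2,\max}]\subseteq[s_{1,\max},s_{2,\max}]$ or the reverse, you justify excluding the reverse containment only by the unexplained claim that it ``would force $g(v)<\lambda$ somewhere,'' and the final step --- that the surviving containment together with the two-intersection bound ``yields $\gamma(u)>\gamma(v)$'' --- is a conclusion, not an argument: nothing you have written bounds $g(v)$ by $\gamma(u)$ on the interval between the two intersection points, where $g(v)>g(u)$. The analogy with \Cref{lem:lambda} does not transfer automatically, because there the nested objects are the minima, which are pinned to the common value $\lambda$ by properness; the global maxima of $u$ and $v$ are not pinned to a common value, which is precisely what is at stake. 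The paper's converse works differently: since $s'<t'$ one has $g(v)<g(u)$ initially; assuming the first intersection satisfies $t_{g,1}\le t_{1,\max}$ forces the second intersection before $s_{\min}$ and leads, via properness and $\gamma$--optimality of $v$ (its last global maximum must follow its last minimum), to a contradiction; hence $t_{g,1}>t_{1,\max}$, so $g(v)<g(u)\le\gamma(u)$ on an interval containing $t_{1,\max}$, and $\gamma$--optimality of $v$ then locates $\gamma(v)$ in that region, giving $\gamma(v)<\gamma(u)$. You need an argument of this kind (or a careful trichotomy reducing to your forward direction, including the case $\gamma(u)=\gamma(v)$) to close the converse.
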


\begin{proof}
	Assume that $\gamma(u) > \gamma(v)$ and suppose $s' > t'$. This implies that $t_{g,1} < t_{1,\max}$ as $g(u) < g(v)$ for all $t \in (t', t_{g,1})$. Hence $t_{g,2} \leq t_{\min}$. This would imply either that $g(v) \geq g(u) = \gamma(u)$ at $t_{2,\max}$ or by \Cref{lem:ys} that there are two additional $p_i$ such that $y(v,p_i) = y(u,p_i)$. This contradicts \Cref{lem:phis}. Hence $s' \leq t'$. Now suppose either $s' = t'$ or $s'+\sigma = t'+\tau$. This implies that there is at most one intersection point, and that $\hat v < \hat u$, which implies that either $g(u)$ or $g(v)$ is non-optimal. 

Suppose that $[s', s' + \sigma] \supset [t', t' + \tau]$. This implies that $g(u) > g(v)$ for all $t \in (t', t_{g,1})$. If $t_{g,1} \leq t_{1,\max}$. Then $t_{g,2} < s_{\min}$. Should $s_{\min} < t_{\min}$ then $g(v)$ would be non-optimal. Thus $s_{\min} \geq t_{\min}$ after which $g(v) \leq g(u)$ which implies $g(v)$ is not $\gamma$--optimal as $g(v, s_{1,\max}) \neq \max\{g(v, t) : t \geq s_{\min}\}$. Therefore $t_{g,1} > t_{1,\max}$. This together with the assumption that $g(v)$ is $\gamma$--optimal implies that $\gamma(v)  < \gamma(u)$.
\end{proof} 

\subsection{Amalgamation}

\begin{lem}
	Suppose $u$ is $\lambda$--optimal and $v$ is $\gamma$--optimal. Then $v$ is nested in $u$.
\end{lem}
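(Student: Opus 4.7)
My plan is to argue by contradiction. Suppose $u$ is $\lambda$--optimal, $v$ is $\gamma$--optimal, and $v$ is not nested in $u$, so that $B := [s', s' + \sigma]$ is not contained in $A := [t', t' + \tau]$. I would then derive a contradiction by combining the extremal structure dictated by each optimality notion with the intersection bound of \Cref{lem:phis}.

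I would first record the structural consequences of each optimality notion together with properness. By $\lambda$--optimality and properness, $g(u)$ attains its minimum value $\lambda$ at exactly the two times $t_{1,\min} < t_{2,\min}$ with a unique global maximum $\gamma(u)$ at some $t_{1,\max} \in (t_{1,\min}, t_{2,\min})$. By $\gamma$--optimality and properness, $g(v)$ attains its minimum value $\lambda$ at some $s_{\min}$ lying strictly between two global maxima $s_{1,\max} < s_{2,\max}$ where $g(v) = \gamma(v)$.

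Next I would compare the two responses at these distinguished times. Properness gives $g(v, t_{i,\min}) \geq \lambda = g(u, t_{i,\min})$ for $i = 1, 2$ and symmetrically $g(u, s_{\min}) \geq \lambda = g(v, s_{\min})$. Combined with the bound of at most two intersections from \Cref{lem:phis}, a sign-change analysis of $g(u) - g(v)$ would force $s_{\min} \in (t_{1,\min}, t_{2,\min})$, and place the two intersection times $t_{g,1}$ and $t_{g,2}$ on opposite sides of $s_{\min}$.

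Finally I would rule out each non-nested configuration of $A$ and $B$, namely the disjoint case, the partial overlap case, and the wrong-way strict containment $A \subsetneq B$. In each situation, I would analyse the sign changes of $u - v$ and propagate them through the cascade $z \to y \to x \to h \to g$ as in the proofs of \Cref{lem:phis} and \Cref{lem:ys}, showing that the forced intersection structure is incompatible with the three-extremum shape of either $g(u)$ or $g(v)$. The hardest case will be the wrong-way nesting $A \subsetneq B$, where \Cref{lem:phis} does not directly obstruct two distinct intersections; there I would invoke \Cref{cor:ys} together with the fact that $v$'s longer pulse, constrained by properness, forces $g(v)$ to dip in such a way that either a third intersection of $g(u)$ and $g(v)$ is manufactured, contradicting \Cref{lem:phis}, or the required max-min-max pattern of $g(v)$ is broken, contradicting $\gamma$--optimality.
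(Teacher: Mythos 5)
Your overall strategy is the same as the paper's: argue by contradiction, split the failure of nesting into the non-overlapping/partially-overlapping configurations versus the wrong-way containment $A \subsetneq B$, and use the intersection bound of \Cref{lem:phis} to dispose of each case. For the non-nested configurations your plan matches the paper exactly: \Cref{lem:phis} then permits at most one intersection of $g(u)$ and $g(v)$, which cannot support two proper responses with the prescribed extremal shapes.

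The gap is in the case you yourself flag as hardest, $A \subsetneq B$. There you leave the conclusion as an unresolved disjunction --- ``either a third intersection is manufactured or the max--min--max pattern of $g(v)$ is broken'' --- without showing that one branch must fire; and your intermediate claim that the pinning inequalities alone force $s_{\min} \in (t_{1,\min}, t_{2,\min})$ does not follow (if $s_{\min} < t_{1,\min}$ the signs of $g(u)-g(v)$ at the three minima are $+,-,-$, which requires only one sign change). The paper closes this case with a single counting argument for which you have assembled all the pieces but not the count itself: if $A \subsetneq B$ then $s' \leq t' < t_{1,\min}$, so both boluses are active before any minimum of either response; properness pins $g(u)$ to $\lambda$ at its two minima (where $g(v) \geq \lambda$) and pins $g(v)$ to $\lambda$ at its minimum (where $g(u) \geq \lambda$), and tracking the sign of $g(u)-g(v)$ from the delivery times through these three pinned points forces a distinct intersection at or before each of the three minima --- three intersections against the budget of two from \Cref{lem:phis}. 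That count is the entire hard case; the ``pattern is broken'' branch of your disjunction is not needed.
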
 

\begin{proof}
	For the sake of contradiction suppose $u$ is nested in $v$. We know that $t'$ must occur before $t_{1,\min}$ which, by the assumption that $u$ is nested in $v$ implies that $s'$ occurs before $t_{1,\min}$. Hence, there exists $t_{g,i}$ at or before each minimum of both $g(u)$ and $g(v)$. As there are at least three minima and at most two possible $t_{g,i}$ we see that $u$ cannot be nested in $v$. Instead, suppose, $u$ and $v$ are not nested. From \Cref{lem:phis} this implies that there is at most one intersection point of $g(u)$ and $g(v)$ contradicting optimality of $u$ or $v$.
\end{proof}

\begin{lem}
	Suppose $g(u)$ is $\lambda$--optimal and $g(v)$ is $\gamma$--optimal such that $\gamma(u) = \gamma(v)$. Then there exists an input $m$ with duration $\tau_{\lambda} > \sigma > \tau_{\gamma}$ such that $\gamma(m) < \gamma$.
\end{lem}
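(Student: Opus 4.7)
The plan is to apply \Cref{lem:lambda,lem:gamma} locally about $u$ and $v$ using continuity of the glucose response in the bolus parameters. By the previous lemma, $v$ is nested in $u$, so $\tau_\gamma<\tau_\lambda$, and the task is to exhibit an intermediate duration at which the global maximum is strictly smaller than the common value $\gamma$.

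The main step is a one-sided perturbation of $u$. For $\varepsilon>0$ small, I would take a pulse $m_\varepsilon$ of duration $\tau_\lambda-\varepsilon$ delivered at $t'$, with magnitude $\hat m_\varepsilon$ chosen via \Cref{thm:bolus} so that $m_\varepsilon$ is proper. Because $u$ is $\lambda$--optimal, its response has a strict global maximum flanked by two points at which $g$ attains $\lambda$. By continuous dependence of the solution of \eqref{eq:eqs}--\eqref{eq:wandh} on $(\hat u,\tau)$, together with continuous dependence of $\hat m_\varepsilon$ on $\varepsilon$ (inherited from \Cref{thm:port} and the monotonicity in $u$), the response $g(m_\varepsilon)$ retains the $\lambda$--optimal profile of \Cref{defn:lambda} for all sufficiently small $\varepsilon$. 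Since $[t',t'+\tau_\lambda-\varepsilon]\subset[t',t'+\tau_\lambda]$, the input $m_\varepsilon$ is nested in $u$ and both are $\lambda$--optimal, so \Cref{lem:lambda} gives $\gamma(m_\varepsilon)<\gamma(u)=\gamma$. Choosing any $\varepsilon\in(0,\tau_\lambda-\tau_\gamma)$ sufficiently small yields $\sigma:=\tau_\lambda-\varepsilon\in(\tau_\gamma,\tau_\lambda)$ with $\gamma(m_\varepsilon)<\gamma$, which is the conclusion.

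An entirely symmetric argument, perturbing instead from $v$, gives an alternative construction: lengthen $v$ to a pulse $m'_\varepsilon$ of duration $\tau_\gamma+\varepsilon$ delivered at $s'$, adjust the magnitude for properness, and note that for small $\varepsilon$ the response remains $\gamma$--optimal with $v$ nested in $m'_\varepsilon$. \Cref{lem:gamma} then gives $\gamma(m'_\varepsilon)<\gamma(v)=\gamma$. Either construction supplies the required $m$.

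The main obstacle is the persistence claim: that the $\lambda$--optimal (respectively $\gamma$--optimal) profile survives the perturbation. This reduces to showing that, for the proper and optimal inputs $u$ and $v$, the relevant local extrema of $g$ and the level $\lambda$ are attained transversally, so that the number and ordering of critical points cannot jump under a sufficiently small continuous deformation of $g$. Strictness of the extrema follows because $u$ and $v$ are assumed to realise the profiles of \Cref{defn:lambda,defn:gamma}; once this is in hand the remaining continuous-dependence input is standard for the linear-in-state cascade in \eqref{eq:eqs}, and the strict inequalities from \Cref{lem:lambda,lem:gamma} close the argument.
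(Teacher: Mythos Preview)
There is a genuine gap in your persistence step. Being $\lambda$--optimal in the sense of \Cref{defn:lambda} requires not merely a local-extrema pattern but that the global maximum lies between two points at which $g$ actually attains the level $\lambda$; this is a codimension-one constraint on the set of proper inputs. Your perturbation fixes the delivery time $t'$ and varies only the duration, and under such a one-parameter deformation the two touching values at $\lambda$ generically separate: after re-adjusting the magnitude for properness, exactly one former minimum sits at $\lambda$ and the other is strictly above it. The transversality you invoke cuts the wrong way---it says the two minima move off $\lambda$ at first order, not that the double-touch configuration is stable. Consequently $m_\varepsilon$ is proper but no longer optimal for its duration, so \Cref{lem:lambda} does not apply to the pair $(u,m_\varepsilon)$. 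This is not a technicality: the paper later establishes in \Cref{cor:forpart3} that fixing $t'$ and changing $\tau$ away from a $\lambda$-- or $\gamma$--optimal input yields an input that is neither $\lambda$-- nor $\gamma$--optimal and has strictly \emph{larger} global maximum---the opposite of what you need. Your symmetric construction from $v$ has the identical defect.

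The paper's argument sidesteps the issue by letting the delivery time move as well: for an arbitrary $\sigma\in(\tau_\gamma,\tau_\lambda)$ it takes $m$ to be the input that is optimal for duration $\sigma$ over all delivery times. By the dichotomy of \cite{town17} such an $m$ is automatically either $\lambda$--optimal or $\gamma$--optimal, and then \Cref{lem:lambda} (comparing $m$ with $u$) or \Cref{lem:gamma} (comparing $m$ with $v$) gives $\gamma(m)<\gamma$ directly, with no continuity or persistence argument required.
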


\begin{proof}
	Choose $\sigma$ as in the statement of the Theorem. We know the input $m$ is either $\gamma$--optimal or $\lambda$--optimal. In either case as it satisfies the conditions of Lemmas \ref{lem:lambda} and \ref{lem:gamma} we have that $\gamma(m) < \gamma$.
\end{proof} 

\begin{thm}
	\label{thm:mainmain}
	Suppose there exists $\tau > 0$ such that $g(v)$ is $\gamma$--optimal and $t$ such that $g(t) > g(0)$. Then an input  $u$ of the form \eqref{eq:u} is globally optimal if and only if the response has at least two global maxima interlaced between two minima.
\end{thm}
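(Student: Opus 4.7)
The plan is to treat the maximum glucose value $\gamma$ as a function of the duration $\tau$ (with delivery time and magnitude chosen so that the input remains proper and fixed-duration optimal) and to show that this function attains its minimum precisely at a duration where the response is simultaneously $\lambda$-optimal and $\gamma$-optimal---which, unpacking \Cref{defn:lambda,defn:gamma}, is exactly the interlaced structure asserted in the theorem. Concretely, \Cref{lem:lambda} tells us that on the range of durations producing $\lambda$-optimal responses, shorter is strictly better, while \Cref{lem:gamma} tells us that on the range producing $\gamma$-optimal responses, longer is strictly better; the minimum of $\gamma$ over all durations is therefore at the boundary between the two regimes.

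For the forward direction, assume $u$ is globally optimal with duration $\tau$. First $u$ must be fixed-duration optimal, since otherwise there would be a same-duration input with strictly smaller $\gamma$, contradicting \Cref{defn:glob}; by the results of \cite{town17}, $u$ is therefore either $\lambda$-optimal or $\gamma$-optimal. Assume for contradiction that $u$ is $\lambda$-optimal but not $\gamma$-optimal. The existence hypothesis supplies a $\gamma$-optimal input $v^{\star}$; if $\gamma(u)=\gamma(v^{\star})$, the preceding amalgamation lemma immediately yields a strictly better $m$, a contradiction. If $\gamma(u)\neq\gamma(v^{\star})$, continuous dependence of $g$ on $\tau$, combined with the monotonicity supplied by \Cref{lem:lambda,lem:gamma}, produces a pair of inputs, one $\lambda$-optimal and one $\gamma$-optimal, with equal global maxima, to which the amalgamation lemma again applies. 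The case where $u$ is $\gamma$-optimal but not $\lambda$-optimal is symmetric, so $u$ must be both, which is exactly the asserted interlaced structure.

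For the backward direction, suppose the response to $u$ has at least two global maxima interlaced between two minima, so $u$ is simultaneously $\lambda$- and $\gamma$-optimal. Let $v\neq u$ be any proper input of duration $\sigma$; without loss of generality $v$ is itself fixed-duration optimal, hence $\lambda$- or $\gamma$-optimal. If $\sigma=\tau$, the conclusion follows from fixed-duration optimality of $u$. If $\sigma>\tau$, \Cref{lem:phis} forces $u$ to be nested in $v$ in the sense required by \Cref{lem:lambda} (when $v$ is $\lambda$-optimal) or \Cref{lem:gamma} (when $v$ is $\gamma$-optimal), so $\gamma(v)>\gamma(u)$; the case $\sigma<\tau$ is symmetric.

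The main obstacle will be the continuity/transition step in the forward direction: rigorously exhibiting a duration at which both structural conditions hold simultaneously requires continuous dependence of the solution of \eqref{eq:eqs}--\eqref{eq:wandh} on $\tau$, together with the hypothesis that some $t$ satisfies $g(t)>g(0)$, which rules out the degenerate regime where no nontrivial excursion occurs and ensures the amalgamation step is not vacuous. Secondary care is needed to verify, in the backward direction, that the nesting hypotheses of \Cref{lem:lambda,lem:gamma} are actually met when comparing $u$ with competitors whose delivery times and durations differ, using \Cref{lem:phis} to limit the number of intersection points of the corresponding responses.
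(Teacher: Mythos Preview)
Your overall plan---treat $\gamma$ as a function of $\tau$, show it decreases on the $\lambda$-optimal side and on the $\gamma$-optimal side toward a common transition duration, and identify that transition with the interlaced shape---is exactly the paper's strategy. Where you diverge is in execution, and one of those divergences is a genuine gap.

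\textbf{Forward direction.} You argue by contradiction and appeal to ``continuous dependence \ldots\ produces a pair \ldots\ with equal global maxima, to which the amalgamation lemma applies.'' The paper does not do this: it \emph{constructs} the optimal duration by a bisection of $[\sigma,\alpha]$ into sequences $\overline\sigma$ (largest $\gamma$-optimal) and $\overline\alpha$ (smallest $\lambda$-optimal), proves $\alpha_{i+1}-\sigma_{i+1}=n^{-1}(\alpha_i-\sigma_i)\to 0$, and then analyses the shape of the limit via the monotone auxiliary function $G(i):=\gamma(u_i)-\gamma(u_i)|_B$ to force a second global maximum to emerge. Your continuity sketch is in the right spirit, but note that producing ``a pair with equal global maxima'' is not actually what you need: if $u$ is globally optimal and strictly $\lambda$-optimal, no $\gamma$-optimal input can match $\gamma(u)$. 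What you really need is a strictly smaller, nested, still $\lambda$-optimal competitor---and justifying its existence is precisely the content of the paper's bisection-plus-shape argument.

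\textbf{Backward direction: this is the real gap.} You write that for $\sigma>\tau$, ``\Cref{lem:phis} forces $u$ to be nested in $v$.'' It does not. \Cref{lem:phis} only says there are at most two intersections of the responses, and two only if the inputs are nested; it gives no lower bound and cannot by itself conclude nesting. Your argument is salvageable, but the nesting comes from the \emph{first} amalgamation lemma (every $\gamma$-optimal input is nested in every $\lambda$-optimal one), applied with $u$ playing whichever role is needed: if $v$ is $\gamma$-optimal then $v$ is nested in $u$ (so $\sigma<\tau$ automatically, and \Cref{lem:gamma} gives $\gamma(v)>\gamma(u)$); if $v$ is $\lambda$-optimal then $u$ is nested in $v$ (so $\sigma>\tau$, and \Cref{lem:lambda} gives $\gamma(v)>\gamma(u)$). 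Once you cite the correct lemma, your route works.

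The paper's own backward argument is different and worth knowing: it does \emph{not} reduce to a fixed-duration-optimal competitor at all. Instead it takes an arbitrary proper $v$ with $\gamma(v)<\gamma(u)$ and counts sign changes of $g(u)-g(v)$ directly: with two global maxima interlaced with two minima, one needs $g(v)<g(u)$ at both $t_{j,\max}$ and $g(v)\ge g(u)$ at both $t_{j,\min}$, forcing more than two intersections of $h(u)$ and $h(v)$ (or, in the borderline case where both intersections sit at the minima, four intersections of $y(u)$ and $y(v)$ via \Cref{cor:ys}), contradicting \Cref{lem:phis}. This avoids your WLOG step entirely and handles all proper $v$ in one stroke.
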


\begin{proof}
	Let $\tau$ be the duration as in the statement of the Theorem. By Lemma \ref{lem:gamma} so long as $\sigma > \tau$ produces a $\gamma$--optimal input $v$ then $\gamma(v) < \gamma(u)$. As $w$ vanishes at infinity, there exists a duration $\alpha$ such that the input $u$ is $\lambda$--optimal.

	Denote by $g(\alpha)$ the response of $g(t)$ to the input $u(t, \alpha)$ and by $g(\sigma)$ the response of $g(t)$ to the input $v(t, \sigma)$. We now construct a globally optimal $g$ and show that its shape is as in the statement of the Theorem. Recursively define the sequences $\overline \alpha := (\alpha_i)_{i=0} ^\infty$ and $\overline \sigma := (\sigma _i)_{i=0} ^\infty$ by $\alpha_0 :=  \alpha$ and $\sigma_0 := \sigma$ and $\alpha_i$ the least element of the following finite ordered partition of the interval $[\sigma_{i-1}, \alpha_{i-1}]$:

\begin{align*}
	{L}_i  := &\l \{\sigma_{i-1}, \frac{ (n-1) \sigma_{i-1} +  \alpha_{i-1}}{n}, \cdots \r . \\  &\quad \cdots,  \l. \frac{k_i \sigma_{i-1} + (n-k_i) \alpha_{i-1}}{n}, \cdots,  \alpha_{i-1} \r \}
 \end{align*}%
 where $n \in \mathbb{N}$ is arbitary and $k_i \leq n$, such that the response:
 \[ 
  g \l (\frac{k_i \sigma_{i-1} + (n-k_i) \alpha_{i-1}}{n} \r ) 
  \]%
 is $\lambda$--optimal. Similarly, $\sigma_i$ is defined to be the greatest element of $L_i$ such that:
 \[ 
    g \l (\frac{k_j \sigma_{i-1} + (n-k_j) \alpha_{i-1}}{n} \r )
 \]%
 is $\gamma$--optimal. The sequence $\overline \sigma$ is a monotone increasing sequence bounded above by $\alpha_i$ for all $\alpha_i \in \overline \alpha$ and therefore has a limit $\tau_-$. Similarly, $\overline \alpha$ is a monotone decreasing sequence bounded below by $\sigma_i$ for all $\sigma_i \in \overline \sigma$ and thus has a limit $\tau_+$. It remains to show that $\tau_- = \tau_+$. Suppose, for all $i$, that $\sigma_i < \alpha_i$. We see that if:
 \[ 
  \sigma _{i+1} = \frac{k_i \sigma_{i} + (n-k_{i}) \alpha_i}{n}
 \]%
 Then $\alpha_{i+1}$ must be the next element of $L_i$, as if were not the next element of $L_i$ would be $\gamma$--optimal contradicting our choice of $\sigma_{i+1}$, that is:
 \[ 
  \alpha _{i+1} = \frac{(k_i-1) \sigma_{i} + (n-k_{i}+1)\alpha_i}{n}
 \]%
 Thus:
 \begin{align*}
 \alpha_{i+1} - \sigma_{i+1} = \frac{1}{n} \l (\alpha_i - \sigma_i \r) =  
                              \ldots
                              = \frac{1}{n^{i+1}} \l (\alpha_0 - \sigma_0 \r)
 \end{align*}%
 
 i.e. $\lim_{i \to \infty} \l (\alpha_{i+1} - \sigma_{i+1} \r ) = 0$ i.e. $\tau_- = \tau_+$. Set $\tau := \tau_+$. 

Thus for all $\varepsilon \in (0, \tau)$ the optimal input with duration $\tau + \varepsilon$ must be $\lambda$--optimal an the optimal input with duration $\tau - \varepsilon$ must be $\gamma$--optimal. By continuity of $g$ there must be at least two equal maxima and two minima. 
	
As the limits, $\tau_-$ and $\tau_+$, are equal and $g$ is a continuous function of the duration we may consider the sequence $(g(\alpha_i))_{i=0} ^\infty$ to determine the shape of $g(\tau)$. Since the durations $\alpha_i$ decrease we have, as in the proof of \Cref{lem:lambda} that $[t_{1,\min, i+1}, t_{2, \min, i+1}] \subseteq [t_{1,\min, i}, t_{2, \min, i}]$ for each $i$. Thus:
	\begin{align}
		\label{eq:bestlabelthis}
		g(u_{i+1}) |_B \geq g(u_i)|_B
	\end{align}%
	where $B :=   [t_{1,\min, i}, t_{2,\min, i}]^c$ -- the complement in $\mathbb{R}_+$. Indeed: $\gamma(u_{i+1}) |_B \geq \gamma(u_i)|_B$. Therefore the response $g(u_{i+1})$ has no additional minima outside the interval $[t_{1,\min, i}, t_{2, \min, i}]$. By \eqref{eq:bestlabelthis}, the function $G(i) := \gamma(u_i) - \gamma(u_i) |_B$ is monotone decreasing, as the global maximum, $\gamma(u_i)$, is decreasing by \Cref{lem:lambda}, and bounded below by $0$. Suppose $\lim G(i) > 0$. This is true only if $\tilde u := \lim u_{i}$ is $\lambda$--optimal. If $g(\tilde u)$ is $\lambda$--optimal then there exists strictly positive $\varepsilon < \gamma(\tilde u) - \max \{ g(\tilde u): g(\tilde u) \neq \gamma(\tilde u) \wedge \dot g(\tilde u) = 0\}$. For all such $\varepsilon$ there is $\delta > 0$ such that $u(\tau - \delta)$ is $\lambda$--optimal as $g(\tau)$ is continuous. By \Cref{lem:phis} $\gamma(u(\tau - \delta)) < \gamma(\tilde u)$. This implies that $\tilde u$ is not the limit of the sequence of $\lambda$--optimal inputs $u_i$ with durations $\alpha_i$. Lastly, $\lim G(i) = 0$ implies that $\gamma(\hat u ) = \gamma( \hat u)|_B$ i.e. the maxima outside the interval $[t_{1,\min}, t_{2,\min}]$ are equal to the maxima inside the interval.
	
	Suppose $g(u)$ is as in the Theorem but there exists proper $v \neq u$ such that $\gamma(v) < \gamma(u)$. Thus, for $j \in \{1,2\}$, we require that $g(v) < g(u)$ at $t_{j,\max}$ and $g(v) \geq g(u)$ at $t_{j,\min}$. Therefore there must be more than two $t_i$ such that $h(u)=h(v)$, unless both $t_{g,i}$ occur at $t_{j,\min}$. In this case, by \ref{cor:ys}, there must be at least four distinct points at which $y(u) = y(v)$ contradicting \Cref{lem:phis}. 
\end{proof}

It is not possible to lower the global maximum of $g$ if $g(t) \leq g(0)$ for all $t$. However, in this case the shape of the reponse, $g$, specified in \Cref{thm:mainmain} does minimise the maximum of $h$. This is shown by \Cref{prop:prop}.  

\begin{prop}
	\label{prop:prop}
	Suppose $g(v) \leq g(0)$ for all $t$ and proper inputs $v$. Additionally, suppose, there exists $u$ for which there are two minima and there is a $t \in (t_{1,\min}, t_{2,\min})$ such that the response $g(u(t)) = g(0)$. Then $\max\{h(u)\} < \max\{h(v)\}$	for all proper $v \neq u$.
\end{prop}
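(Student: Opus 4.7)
\emph{Proof plan.} I argue by contradiction: suppose a proper $v \neq u$ satisfies $\max h(v) \leq \max h(u)$, and derive a contradiction from the intersection bounds of \Cref{lem:phis,cor:ys}.

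I first characterize $g(u)$. Because $g(v) \leq g(0)$ for every proper $v$ (in particular $v = u$), the interior point $t^* \in (t_{1,\min}, t_{2,\min})$ at which $g(u)(t^*) = g(0)$ must be a local maximum of $g(u)$, so $\dot g(u)(t^*) = 0$ and $h(u)(t^*) = w(t^*)/g(0)$; similarly, $h(u)(t_{i,\min}) = w(t_{i,\min})/\lambda$ at each minimum. Hence $g(u)$ has an ``M-shape'' and $h(u)$ is pinned at three points. As a preliminary, I observe that if $h(u) > h(v)$ everywhere, then $D := g(v) - g(u)$ satisfies $\dot D = -h(v) D + (h(u) - h(v)) g(u)$ with strictly positive forcing and $D = 0$ at the first delivery time, so $D > 0$ strictly thereafter; this forces $g(v) > \lambda$, contradicting the properness of $v$. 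Hence $h(u)$ and $h(v)$ must coincide at least once.

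I then case-split on $g(v)(t^*)$. In the case $g(v)(t^*) < g(0)$, the inequalities $g(v)(t_{i,\min}) \geq \lambda = g(u)(t_{i,\min})$ for $i = 1, 2$ together with $g(v)(t^*) < g(u)(t^*)$ force $g(v) - g(u)$ to change sign at least twice in $(t_{1,\min}, t_{2,\min})$, saturating \Cref{lem:phis} for $\phi = g$; any further intersection---from $g(v)$ and $g(u)$ coinciding near the bolus onset, or from the tails converging to the common steady state while $g(v)$ is required to separately attain $\lambda$---then yields the contradiction. In the case $g(v)(t^*) = g(0)$, $t^*$ is also a local maximum of $g(v)$, yielding $h(v)(t^*) = h(u)(t^*)$; the properness of $v$ combined with $g(v) \leq g(0)$ forces $g(v) = \lambda$ at exactly one of $t_{1,\min}(u)$ or $t_{2,\min}(u)$ (else $g(v) - g(u)$ would acquire too many zeros), producing a second $h$-intersection. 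With $h(u), h(v)$ intersecting exactly at $t^*$ and this minimum, the assumption $\max h(v) \leq \max h(u)$ rules out the configuration in which $h(v) > h(u)$ outside the interval between the two intersections, leaving $h(u) \geq h(v)$ globally. \Cref{cor:ys} applied at each of the two intersection points then produces at least four distinct intersections of $y(u), y(v)$, contradicting \Cref{lem:phis} for $\phi = y$.

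The main obstacle is the final case: one must use $\max h(v) \leq \max h(u)$ carefully, together with the location of $\max h(u)$ relative to the two $h$-intersections, to preclude the configuration in which $h(v) > h(u)$ inside $(t_{j,\min}, t^*)$, since this would block the direct application of \Cref{cor:ys}; resolving this may require a refined analysis of the input difference $u - v$ or a piecewise version of \Cref{lem:ys}.
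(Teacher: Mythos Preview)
Your approach has a genuine gap, and it stems from not using the structural consequence of the two $g$-intersections you correctly identify.

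In your Case~1 ($g(v)(t^*) < g(0)$), you establish two sign changes of $g(v) - g(u)$ in $(t_{1,\min}, t_{2,\min})$, which \emph{saturates} but does not \emph{violate} \Cref{lem:phis}. You then need a third crossing, but none of your candidates works: the coincidence at the bolus onset is a boundary point (\Cref{lem:phis} counts crossings strictly after $\min\{t',s'\}$); convergence of both tails to $g(\infty)$ does not force a further crossing; and $g(v)$ can attain $\lambda$ inside $(t_{g,1}, t_{g,2})$ without touching $g(u)$ again, since $g(u) > \lambda$ there. Most tellingly, your contradiction hypothesis $\max h(v) \le \max h(u)$ is never invoked in this case, so no contradiction can emerge. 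Case~2 you yourself flag as incomplete.

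The paper's route is shorter and avoids the case split entirely. It does \emph{not} argue by contradiction on $\max h$. From the same two-crossing configuration you found --- $g(v) < g(u)$ on $(t_{g,1}, t_{g,2}) \subset [t_{1,\min}, t_{2,\min}]$ and $g(v) > g(u)$ outside --- it reads off directly, via the chain in \Cref{lem:phis}, that $u - v$ must change sign twice. For rectangular pulses this forces the intervals to be nested, and the sign pattern (with $g(v) > g(u)$ initially after $\min\{t',s'\}$) pins down that $v$ is nested in $u$. Since $h$ is monotone in the bolus magnitude, nesting with the required sign pattern gives $\max h(v) > \max h(u)$ immediately. The key step you are missing is that two $g$-crossings are not an obstruction to be sharpened into three, but rather the \emph{mechanism} that forces nesting.
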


\begin{proof}
	The proof follows if all such $v$ are nested in $u$ as $h$ is a monotonic function of $\hat u$. As $g(u)$ and $g(v)$ may intersect at most twice we have that $t_{g,1} \in [t_{1,\min}, t_{1,\max}]$ and $t_{g,2} \in [t_{1,\max}, t_{2,\min}]$ and that $g(v) < g(u)$ for all $t \in (t_{g,1}, t_{g,2})$. Thus $g(v) > g(u)$ for all $t > t'$ such that $t \not \in [t_{g,1}, t_{g,2}]$. This occurs only if $v$ is nested in $u$.
\end{proof}

\subsection{Optimal Duration for a Fixed Delivery Time}

We conclude this Section by characterising the optimality of an input with varying duration and fixed delivery time. \Cref{thm:fixed} is the analogous result for durations to the results for delivery times derived in \cite{town17}. Part 1 of \Cref{thm:fixed} is a generalisation of the main result, \emph{(Theorem 16)}, of \cite{good??}, when restricted to the class of rectangular inputs. This generalisation stems from only assuming that $w$ is bounded and vanishes at infinity and as we do not require that the global maximum, of $g$, occurs before its global minimum\footnote{This assumption holds if case \emph{A} from \Cref{thm:fixed} holds for $\tau = 0$. As, if case \emph{A} holds for $\tau = 0$. Then it holds for all $\tau \geq 0$.} for all inputs $u$, of the form \eqref{eq:u}. The results of \cite{good??} hold for more general inputs of the form $u(t) = \overline u  + \hat u (t',t)$ where $\hat u(t',t)$ is a positive bounded function such that $\hat u(t',t) = 0$ for all $t < t'$ and $\overline u$ is as in \eqref{eq:u}.

\begin{thm}
	\label{thm:fixed}
	Consider the following two cases for $g(t)$:
\begin{enumerate}
	\item[\emph{A.}] no global maximum occurs after a global minimum. 
	\item[\emph{B.}] no global maximum occurs before a global minimum. 
\end{enumerate}

Fix $t'$. Let $u(t', \tau)$ and $v(t', \sigma)$ be two distinct inputs delivered at $t'$. Suppose either: $u$ and $v$ satisfy \emph{A}, $u$ and $v$ satisfy \emph{B.} or $u$ satisfies \emph{A} and $v$ satisfies \emph{B}. Then, for each respective case:

\begin{enumerate}
	\item{$\gamma(u) < \gamma(v)$ if and only if $\tau < \sigma$.}

	\item{$\gamma(u) < \gamma(v)$ if and only if $\tau > \sigma$.}
	
	\item{there exists $\alpha \in (\sigma, \tau)$ such that $\gamma(m(t',\alpha)) < \min\{\gamma(v), \gamma(u)\}$. Furthermore, for all $\alpha \not \in (\sigma, \tau)$ the maximum $\gamma(m(t',\alpha)) \geq \min\{\gamma(v), \gamma(u)\}$.}

\end{enumerate}
\end{thm}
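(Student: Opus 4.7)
The strategy is to adapt the proofs of \Cref{lem:lambda} and \Cref{lem:gamma} to the fixed delivery time setting. A convenient simplification is that fixing $t' = s'$ automatically makes $\tau < \sigma$ equivalent to nesting of the bolus intervals, $[t',t'+\tau] \subset [t',t'+\sigma]$; moreover, because both inputs are proper and hit the same $\lambda$, the magnitude $\hat u$ must be strictly larger than $\hat v$ whenever $\tau < \sigma$ (if $\hat u \leq \hat v$ then $u \leq v$ pointwise with strict inequality on $(t'+\tau,t'+\sigma]$, and monotonicity from \Cref{thm:port} gives $g(v) < g(u)$ at $t^u_{\min}$, violating properness of $v$). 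The main tools throughout will be \Cref{lem:phis} (at most two intersection points of $g(u)$ and $g(v)$ for $t > t'$) and the shape constraints \emph{A}/\emph{B}, which fix the relative temporal order of global maxima and global minima of the responses.

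For Part~1 (both in Case \emph{A}) I would prove the equivalence by assuming $\gamma(u) > \gamma(v)$ and deducing $\tau > \sigma$; swapping the roles of $u$ and $v$ recovers the other direction. Because $g(u,t') = g(v,t')$ and the cascade eventually transmits $u$'s larger bolus magnitude, the difference $\Delta := g(u) - g(v)$ is strictly negative on some interval just after $t'$. Under Case \emph{A}, every global maximum time of $g(u)$ precedes every global minimum time. At such a maximum time $\Delta > 0$ by the assumption $\gamma(u) > \gamma(v)$, and at the minimum $\Delta \leq 0$ since $g(u) = \lambda$ while $g(v) \geq \lambda$ by properness. This forces one zero of $\Delta$ between $t'$ and the maximum and another between the maximum and the minimum. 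If in addition $\tau < \sigma$, then at $t = t'+\tau$ the input $u$ reverts to $\bar u$ while $v$ continues its bolus, so $v$ must attain $\lambda$ strictly later than $u$; tracking $\Delta$ forwards from $t_{\min}^u$ to $t_{\min}^v$ produces a further sign change of $\Delta$, exceeding the two-intersection bound of \Cref{lem:phis} and giving a contradiction. Part~2 follows the mirror argument, with the temporal roles of maxima and minima swapped as prescribed by Case \emph{B}.

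Part~3 is the amalgamation step and mirrors the structure of the proof of \Cref{thm:mainmain}. Using continuity of $g$ (and hence of $\gamma$) in the duration parameter $\alpha$, combined with Parts~1 and~2, I would show that decreasing $\alpha$ from $\tau$ while $m(t',\alpha)$ remains in Case \emph{A} strictly decreases $\gamma$, and that increasing $\alpha$ from $\sigma$ while $m(t',\alpha)$ remains in Case \emph{B} strictly decreases $\gamma$. Recursively bracketing with a decreasing sequence $(\alpha_i)$ of Case \emph{A} durations and an increasing sequence $(\sigma_i)$ of Case \emph{B} durations, exactly as in the construction of $\tau_+$ and $\tau_-$ in the proof of \Cref{thm:mainmain}, yields a common limit $\alpha^* \in (\sigma,\tau)$ with $\gamma(m(t',\alpha^*)) < \min\{\gamma(u),\gamma(v)\}$. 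The ``furthermore'' clause is then immediate: any $\alpha \geq \tau$ keeps the response in Case \emph{A}, so Part~1 gives $\gamma(m(t',\alpha)) \geq \gamma(u) \geq \min\{\gamma(u),\gamma(v)\}$; any $\alpha \leq \sigma$ keeps it in Case \emph{B}, so Part~2 gives $\gamma(m(t',\alpha)) \geq \gamma(v) \geq \min\{\gamma(u),\gamma(v)\}$.

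The main obstacle will be Part~3. Parts~1 and~2 reduce to the intersection bookkeeping sketched above and are essentially routine once the setup is in place. Part~3 must address the possibility that as $\alpha$ varies the response does not transition directly from Case \emph{A} to Case \emph{B} but passes through intermediate regimes such as $\lambda$-- or $\gamma$--optimal responses. The recursive bracketing sidesteps this by only ever selecting durations on either side of the transition, but verifying that the two sequences remain well-defined inside $[\sigma,\tau]$ and that continuity of $\gamma$ in $\alpha$ closes the argument cleanly is the delicate point.
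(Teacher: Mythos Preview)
Your overall architecture is right, but you have missed the key simplification that drives the paper's proof: with a common delivery time $t'=s'$, the difference $u-v$ changes sign at most \emph{once}, not twice. Trace the proof of \Cref{lem:phis}: on $[t',t'+\tau]$ we have $u-v=\hat u-\hat v$, on $(t'+\tau,t'+\sigma]$ we have $u-v=-\hat v$, and afterwards $u-v=0$; that is a single sign change. Hence $g(u)$ and $g(v)$ have at most one intersection $t_g$ for $t>t'$. The paper uses this one-intersection bound throughout Parts~1--3, and your proposal instead works with the two-intersection bound from the literal statement of \Cref{lem:phis} (which applies because the intervals are technically nested, but is not sharp here).

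This matters for Parts~1 and~2. In your Part~1 argument you correctly locate two sign changes of $\Delta=g(u)-g(v)$ (negative after $t'$, positive at $t^u_{\max}$, non-positive at $t^u_{\min}$). With the one-intersection bound that is already the contradiction; you do not need a third sign change. Your attempt to manufacture one --- ``$v$ must attain $\lambda$ strictly later than $u$'' because its bolus ends later --- is not justified: the ordering of minimum times is not an immediate consequence of the ordering of bolus endpoints, and you would be hard pressed to prove it without going through the intersection count you are trying to establish. The paper's argument for Part~1 is cleaner: from $\tau<\sigma$ one gets $g(u)<g(v)$ initially; the single $t_g$ must lie at or before the first minimum of $g(v)$ (else $v$ is not proper); ruling out $t_g\le\max\{s_{\max}\}$ by properness of $u$ forces $t_g$ into $(\max\{s_{\max}\},s_{1,\min}]$, whence $\gamma(u)<\gamma(v)$. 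Part~2 is the mirror.

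For Part~3 your recursive bracketing (imported from the proof of \Cref{thm:mainmain}) would work but is heavier than needed. The paper first shows well-posedness, i.e.\ that $u$ in Case~\emph{A} and $v$ in Case~\emph{B} forces $\sigma<\tau$, using the single-intersection bound again. The ``furthermore'' clause then follows because any $\alpha<\sigma$ forces $m$ into Case~\emph{B} (otherwise the well-posedness argument, applied to $m$ and $v$, gives $\sigma<\alpha$) and Part~2 yields $\gamma(m)>\gamma(v)$; similarly for $\alpha>\tau$. Existence of an improving $\alpha\in(\sigma,\tau)$ then comes from one application of continuity of $g$ in the duration, not from a convergent sequence. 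Your concern about intermediate regimes between \emph{A} and \emph{B} is legitimate, but the paper sidesteps it precisely because it is not trying to locate the optimal duration, only to exhibit some $\alpha$ strictly better than both endpoints.
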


\begin{proof}
	Throughout this proof we say $g(t, \tau)< g(t, \sigma)$ \emph{initially} if there exists $\varepsilon > 0$ such that $g(t, \tau)< g(t, \sigma)$ for all $t \in (t', t'+ \varepsilon)$. 	

	\begin{proofpart}
		Suppose $\tau < \sigma$. Then, by \Cref{thm:port} Part 2 and as $u$ and $v$ are proper, $\hat u > \hat v$. Thus, initially $g(t, \tau) < g(t, \sigma)$. By \Cref{lem:phis} there is at most one $t_{g}$ at which $g(t_g, \tau) = g(t_g, \sigma)$. As $u$ and $v$ are proper this $t_g$ must exist and $t_g \leq s_{1,\min}$. Otherwise $g(s_{1,\min}, \tau) < \lambda$. Note that \Cref{lem:phis} implies that $g(u) - g(v)$ must change sign at $t_g$ even if $t_g =  s_{1,\min}$. If $t_g \in (\max\{s_{\max}\}, s_{1,\min}]$, where $\max\{s_{max}\}$ is the greatest time at which $g(v)$ is maximised. Then $\gamma(u) < \gamma(v)$ and $\{t : g(t,\tau) =\lambda\} \subset (\max\{s_{\max}\}, s_{1,\min}]$ i.e. all minima of $g(u)$ occur between the last maximum of $g(v)$ and the first minimum of $g(v)$. 
		
		Instead, suppose $t_g \leq \max\{s_{\max}\}$. This implies that $\gamma(u)$ occurs after $t_g$. Thus, as $g(t, \tau) > \lambda$ for all $t \leq \max\{t_{\max}\}$ and $g(u) > g(v)$ for all $t> t_g$, $u$ is not proper.

		Suppose $\gamma(u) < \gamma(v)$. If initially $g(t, \tau) < g(t,\sigma)$ then $\tau < \sigma$. Suppose, initially $g(t, \tau) > g(t,\sigma)$. As $g(v) > g(u)$ for all $t > t_g$ we have that $\max\{s_{\min}\} \leq t_g$. Otherwise $u$ would not be proper. By assumption $\max\{s_{\max}\} < \min\{s_{\min}\}$. Therefore $\max\{s_{\max}\} < t_g$. Then as there is at most one intersection point, of $g(u)$ and $g(v)$, we see that $t_g > \max\{t_{\max}\}$. Contradicting our assumption that $\gamma(u) < \gamma(v)$.
	\end{proofpart}

	\begin{proofpart}
		Suppose $\tau > \sigma$. This implies that initially $g(t, \tau)> g(t,\sigma)$. Hence, similarly to Part 1 above, we see that $t_g \geq \max\{s_{\min}\}$. Thus all minima of $g(v)$ must occur before $t_g$ and all minima of $g(u)$ must occur after $t_g$. Thus $t_g \in [\max\{s_{\min}\}, \min\{t_{\min}\}]$. In particular, this implies that $t_g < \min\{t_{\max}\}$. As $g(v) > g(u)$ for all $t > t_g$ we have, by the assumed shape of $g(u)$, that $g(t_{\max}, \sigma) > \gamma(u)$ i.e. $\gamma(v) > \gamma(u)$.
		

		Suppose $\gamma(u) < \gamma(v)$. If $t_g > \min\{t_{\min}\}$. Then $g(v) > g(u) \geq \lambda$ for all $t < t_g$. Otherwise there would exist $s$ such that $g(v) < \lambda$. Additionally, $g(v) < g(u) \leq \gamma(u)$ for all $t > t_g$. Thus all $s_{\max} < t_g$ and as no minimum of $g(v)$ exists after $\min\{s_{\max}\}$, we see that $v$ is not proper. Hence, $t_g \leq \min\{t_{\min}\}$. If  $g(v) > g(u)$ initially. Then $\max\{g(v,s)\} < \max\{g(v,t)\} <  \gamma(u)$ for $s \in [t', t_g]$ and $t > t_g$. This is because no global maximum may occur before the last minimum of $g(v)$. Hence $g(v) < g(u)$ initially. This implies that $\sigma < \tau$.
	\end{proofpart} 

	\begin{proofpart}
		We note that the problem is well-posed as if $\sigma > \tau$ for $u$ satisfying \emph{A} and $v$ satisfying \emph{B}. Then initially $g(v)>g(u)$. Thus $t_g \leq \min\{s_{\min}\}$, after which $g(u) > g(v) \geq \lambda$. This implies either $u$ is not proper or $t_{g} = \min\{s_{\min}\}$. In which case there exists $s < t_g$ such that $g(v) > \gamma(u)$. As $v$ satisfies \emph{B} there must exist $s > t_g$ such that $g(v) > \gamma(u) > g(u)$. Contradicting the uniqueness of $t_g$.


		Suppose $\alpha < \sigma$. Then $m(t', \alpha)$ satisfies \emph{B} as $g(m) < g(v)$ initially. Thus, by the above, $\gamma(m) > \gamma(v)$. Similarly for $\tau$.
		
		As $g$ is a continuous function of the duration there exists $\alpha < \tau$ such that $m(t', \alpha)$ satisfies \emph{A} and by the above $\gamma(m) < \gamma(u)$ and $\alpha > \sigma$. Similarly for $v$.


		\end{proofpart}
		\end{proof} 

\Cref{cor:tried} characterises when it is better to optimise the delivery time instead of the duration of an input.  

\begin{cor}
	\label{cor:tried}
	Fix $t'$. There exists delivery time $s' \neq t'$ such that $\gamma(s', \tau) < \gamma(t', \tau)$ for all $\tau$ if and only if $u(t', \tau)$ satisfies either \emph{A} or \emph{B} of \Cref{thm:port} for all $\tau$.
\end{cor}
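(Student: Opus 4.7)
The plan is to use two results derived in \cite{town17} for varying the delivery time $t'$ with fixed duration: firstly, the delivery-time analog of \Cref{thm:fixed}, which gives strict monotonicity of $\gamma$ in $t'$ within each of cases \emph{A} and \emph{B} and an intermediate optimum when both cases are realised; and secondly, the characterisation from \cite{town17} of the optimal delivery time at a fixed duration as precisely the one whose response exhibits the $\lambda$-- or $\gamma$--optimal interlaced max--min structure of Definitions~\ref{defn:lambda} and \ref{defn:gamma}.

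For necessity I would argue by contrapositive: if the right-hand side fails, some $\tau_0$ yields a response $g(t', \tau_0)$ satisfying neither \emph{A} nor \emph{B}, so this response has a global maximum lying between two global minima or a global minimum lying between two global maxima. Hence $u(t', \tau_0)$ is $\lambda$--optimal or $\gamma$--optimal in the sense of \Cref{defn:lambda,defn:gamma}, and by the delivery-time characterisation of \cite{town17} the delivery time $t'$ is already optimal at duration $\tau_0$. In particular $\gamma(s', \tau_0) \geq \gamma(t', \tau_0)$ for every $s' \neq t'$, contradicting the uniform improvement assumed on the left-hand side.

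For sufficiency I would treat case \emph{A} holding at every $\tau$ (case \emph{B} is symmetric, with $s' > t'$ in place of $s' < t'$). For each $\tau$ the delivery-time analog of \Cref{thm:fixed} yields an optimal $t^*(\tau) < t'$ such that $s \mapsto \gamma(s, \tau)$ is strictly monotone on $[t^*(\tau), t']$, so any $s' \in (\sup_\tau t^*(\tau),\, t')$ satisfies $\gamma(s', \tau) < \gamma(t', \tau)$ simultaneously for every $\tau$. The main obstacle is proving the strict inequality $\sup_\tau t^*(\tau) < t'$: a sequence $\tau_n$ with $t^*(\tau_n) \to t'$ would, using continuity of the response in both arguments and the asymptotic vanishing of $w$, produce a limiting duration $\tau_\infty$ whose response at $t'$ lies on the boundary between case \emph{A} and the interlaced regime, contradicting strict case \emph{A} at every $\tau$. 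Handling the $\tau \to 0$ and $\tau \to \infty$ asymptotics together with the implicit adjustment of $\hat u$ needed to keep each input proper is the chief technical difficulty here.
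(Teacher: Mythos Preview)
The paper states this corollary without proof. (Note also the evident typo: the reference to \Cref{thm:port} should be to \Cref{thm:fixed}, where cases \emph{A} and \emph{B} are defined.)

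Your argument reads the quantifiers as $\exists s'\,\forall\tau$: a \emph{single} alternative delivery time beating $t'$ at every duration. That is the natural parsing of the English, and under it your necessity direction is fine, but you correctly flag a real obstruction in the sufficiency direction: nothing in the paper supplies the uniform bound $\sup_\tau t^*(\tau)<t'$, and worse, if case \emph{A} were realised at some durations and case \emph{B} at others then no single $s'$ could improve on $t'$ simultaneously, since the delivery-time analogue of \Cref{thm:fixed} would require $s'<t'$ for the former and $s'>t'$ for the latter.

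Since the paper offers no argument and presents the statement as a bare corollary, the intended reading is almost certainly the weaker $\forall\tau\,\exists s'$: for each duration, \emph{some} better delivery time exists. Under that reading the claim is immediate from the delivery-time characterisation of \cite{town17} that you already invoke: for fixed $\tau$, the delivery time $t'$ is optimal if and only if the response has the interlaced ($\lambda$-- or $\gamma$--optimal) shape, and that shape is precisely the negation of ``\emph{A} or \emph{B}''. Hence ``\emph{A} or \emph{B} holds at $(t',\tau)$'' is equivalent to ``$t'$ is not delivery-time optimal at duration $\tau$''; quantifying over all $\tau$ gives the corollary. Your contrapositive for necessity then runs in both directions, and the elaborate sufficiency construction becomes unnecessary.
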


We conclude this section with \Cref{cor:forpart3} which extends \Cref{lem:lambda,lem:gamma} to the case of a fixed input time.  

\begin{cor}
	\label{cor:forpart3}
	Fix $t'$. Suppose $u(t', \tau)$ is proper and  either $\lambda$ or $\gamma$--optimal. Then $\gamma(v(t', \sigma)) > \gamma(u(t', \tau))$ for any $\sigma \neq \tau$ where $v(t', \sigma)$ is proper. Futhermore, such $v$ is neither $\lambda$ nor $\gamma$--optimal.
\end{cor}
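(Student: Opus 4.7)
Plan. I prove the corollary when $u = u(t',\tau)$ is $\lambda$-optimal; the $\gamma$-optimal case is analogous with \Cref{lem:gamma} replacing \Cref{lem:lambda}. Let $t_{1,\min} < t_{\max} < t_{2,\min}$ be the two minima at level $\lambda$ flanking the global maximum of $g(u)$ guaranteed by the $\lambda$-optimal shape.

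The strict inequality $\gamma(v) > \gamma(u)$, for proper $v = v(t',\sigma)$ with $\sigma \neq \tau$, is argued by contradiction. Assume $\gamma(v) \leq \gamma(u)$. Properness yields $g(v) \geq g(u)$ at $t_{1,\min}$ and $t_{2,\min}$, while the assumption gives $g(v) \leq g(u)$ at $t_{\max}$, so $g(v)-g(u)$ has at least two sign changes in $[t_{1,\min}, t_{2,\min}]$. By \Cref{lem:phis} these exhaust all $g$-intersections for $t>t'$, yielding exactly two points $t_{g,1} \in [t_{1,\min}, t_{\max}]$ and $t_{g,2} \in [t_{\max}, t_{2,\min}]$. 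Subtracting the last line of \eqref{eq:eqs} at a $g$-intersection gives $\dot g(u)-\dot g(v) = g(u)(h(v)-h(u))$; combined with the sign pattern this forces $h(v) \geq h(u)$ at $t_{g,1}$ and $h(v) \leq h(u)$ at $t_{g,2}$, so $h(v)-h(u)$ changes sign somewhere in $(t_{g,1}, t_{g,2})$. I then exploit the initial identity $h(u)(t') = h(v)(t')$ together with the direction in which $h$ separates after $t'$ (dictated by the sign of $\hat u - \hat v$, or by $\tau \neq \sigma$ when $\hat u = \hat v$) to propagate sign-change counts along the cascade $z \to y \to x \to h$ governed by \eqref{eq:eqs}, calling on \Cref{cor:ys} on sub-intervals where $h(u)-h(v)$ has constant sign. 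The resulting tally of zero-crossings of some $\phi(u)-\phi(v)$ for $\phi \in \{z, y, x, h, g\}$ strictly exceeds the bound of \Cref{lem:phis}, the desired contradiction.

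For the furthermore clause, assume $v(t',\sigma)$ were $\lambda$-optimal. With $s'=t'$, the nesting condition in \Cref{lem:lambda} reduces to $\sigma < \tau$ versus $\sigma > \tau$, and the lemma reads $\gamma(u) > \gamma(v)$ if and only if $\sigma < \tau$. If $\sigma < \tau$, \Cref{lem:lambda} directly gives $\gamma(v) < \gamma(u)$, conflicting with the strict inequality just established. If $\sigma > \tau$, I apply that same strict inequality to the pair $(v, u)$ with $v$ now in the role of the $\lambda$-optimal input of duration $\sigma$ and $u$ the proper competitor of duration $\tau \neq \sigma$; it forces $\gamma(u) > \gamma(v)$, contradicting the inequality as applied to $u$. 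The case when $v$ is $\gamma$-optimal is dealt with identically using \Cref{lem:gamma} and its accompanying nesting reversal.

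The primary obstacle is the intersection-counting in the first step: the two $g$-intersections yield only one guaranteed sign change of $h(u)-h(v)$, and the initial coincidence at $t=t'$ does not count towards the bound of \Cref{lem:phis}. Closing the count to a strict overflow will require a careful case split on $\hat u - \hat v$ and $\tau - \sigma$ to identify the initial separation direction, followed by a systematic lifting of the $g$-level sign pattern through the chain of ODEs using \Cref{cor:ys}; this is where the main work of the proof will lie.
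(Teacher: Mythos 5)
The paper disposes of this corollary in one line---``follows by similar argument to the proofs of \Cref{lem:lambda,lem:gamma}''---and the similar argument it has in mind turns on an observation your proposal misses, without which your main step does not close. When the two pulses share the delivery time $t'$, the difference $u-v=\hat u\chi_{[t',t'+\tau]}-\hat v\chi_{[t',t'+\sigma]}$ is constant on $[t',t'+\min\{\tau,\sigma\}]$, single-signed on the leftover piece of the longer pulse, and zero afterwards, so it changes sign at most \emph{once}; propagating this through the cascade $z\to y\to x\to h\to g$ exactly as in the proof of \Cref{lem:phis} shows that every difference $\phi(u)-\phi(v)$, and in particular $g(u)-g(v)$, changes sign at most once for $t>t'$. (Both cited proofs say this explicitly; e.g.\ the proof of \Cref{lem:gamma}: ``suppose either $s'=t'$ or $s'+\sigma=t'+\tau$. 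This implies that there is at most one intersection point.'') With that sharpened bound the corollary is immediate: if $u$ is $\lambda$--optimal, properness of $v$ gives $g(v)\geq\lambda=g(u)$ at $t_{1,\min}$ and $t_{2,\min}$, while $\gamma(v)\leq\gamma(u)$ would force $g(v)\leq g(u)$ at the intervening global maximum---a $(+,-,+)$ pattern with two sign changes; the $\gamma$--optimal case is the mirror image, and the ``furthermore'' clause follows because a $\lambda$-- or $\gamma$--optimal $v$ would itself require two intersections with $g(u)$, which coincident delivery times forbid.

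Your proposal instead works with the generic two-crossing bound of \Cref{lem:phis}, derives exactly two admissible $g$-intersections and a single sign change of $h(u)-h(v)$ between them, and then concedes that manufacturing an overflow of the bound ``is where the main work of the proof will lie.'' That is not a deferred technicality: under the two-crossing bound the sign pattern you exhibit is perfectly consistent, so no contradiction is available and the central inequality $\gamma(v)>\gamma(u)$ is not established. The ``furthermore'' argument is also shaky as written, since it applies \Cref{lem:lambda} with $s'=t'$ even though the proof of that lemma derives $t'<s'$ strictly and the proof of \Cref{lem:gamma} explicitly treats $s'=t'$ as a degenerate case that destroys optimality; the one-crossing observation handles this cleanly and is the route the paper intends.
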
 

\begin{proof}
	This follows by similar argument to the proofs of \Cref{lem:lambda,lem:gamma}.	
\end{proof}

\section{Algorithm for Optimal Duration}

As the duration, $\tau$, is bounded below by $0$ the following algorithm may be used to locate the optimal duration: 

\subsection*{Algorithm:}
\begin{enumerate}
	\item{Set $\tau = 0$ i.e. $\chi_A  = \delta$. If the response $g$ is $\lambda$--optimal then $\tau$ is globally optimal. Otherwise:}
	\item{Choose $\tau > 0$:}
		\begin{enumerate}
			\item{if $g$ is $\lambda$--optimal then proceed to step 3}
			\item{otherwise increase $\tau$ until $g$ is $\lambda$--optimal}
		\end{enumerate}
	\item{Recursively bifurcate the interval $[\sigma, \alpha]$, where $\sigma$ is the largest known $\tau$ such that $g$ is $\gamma$--optimal and $\alpha$ is the least known $\tau$ such that $g$ is $\lambda$--optimal.}
\end{enumerate} 

\begin{rem}
If the condition that $g$ is $\lambda$--optimal is replaced by condition \emph{B} from \Cref{thm:fixed} and the condition that $g$ is $\gamma$--optimal is replaced by condition \emph{A} from \Cref{thm:fixed}, this algorithm may be adapted to find the optimal duration for a fixed delivery time $t'$. 
\end{rem}

\subsection*{Numerical Example:}
In the example presented in Figures \ref{fig:gprofs}--\ref{fig:durs}, the algorithm to locate the optimal duration was applied to a system where the parameters of \eqref{eq:eqs} and \eqref{eq:wandh} were chosen to be:  $d=0.025$, $k=1806^{-1}$, $c=0.025$, $a=0.0101$, $b=8.16 \times 10^{-4}$, $G=0.0023$, $E=1.0$, and $r(t) = 263^{-1} f_1(t)$, where $f_1 (t)$ is the solution to the system of linear differential equations:
\[
	\begin{pmatrix}
		\dot f_1(t) \\
		\dot f_2(t)
	\end{pmatrix} 
	= 
	\l ( \frac{1}{60} \r)
	\begin{pmatrix}
		-1 &  1 \\
		0  & -1
	\end{pmatrix}
	\begin{pmatrix}
		f_1(t) \\
		f_2(t)
	\end{pmatrix} 
	+
	\begin{pmatrix}
		0 \\
		\rho(t)
	\end{pmatrix}
\]%
where: $\rho(t) := 5 \chi_{[300, 800]}(t) + 100\chi_{[450,460]}(t)$. We take the initial conditions to be as in Section \ref{sec:prelim} and set $g(\infty) = g(0) = 5.0 \mathrm{mmolL}^{-1} \, (90 \mathrm{mgdl}^{-1})$. The minimum glucose concentration $\lambda$ is chosen to be $4.0 \mathrm{mmolL}^{-1} \, (72 \mathrm{mgdl}^{-1})$. 
 
For computational reasons the smallest duration tested was $\sigma = 2$. The longest duration considered was $\alpha = 1000$. In Figures \ref{fig:gprofs} and \ref{fig:durs} the blue, green, dashed black, red and cyan lines correspond to the durations $\tau = 100, 250, 370, 550$ and $600$ respectively.

 \begin{figure}[]
	 \begin{flushleft}
		 \includegraphics[width=8cm, height=9.7cm, keepaspectratio]{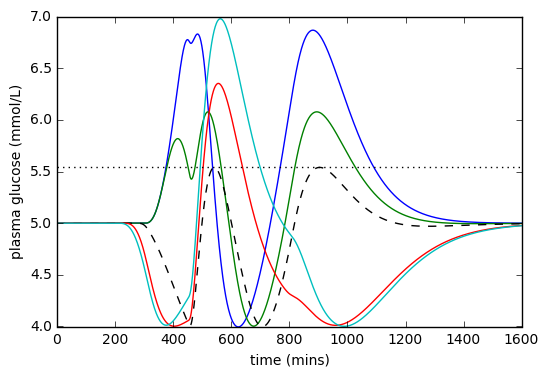}
	\end{flushleft}
	\caption{Selected plasma glucose concentration $g(t)$  profiles for various input durations. The dashed black profile meets the global optimality conditions.}
	 \label{fig:gprofs}
 \end{figure}

 As the duration approaches $\tau = 370$, which corresponds to the dashed black profile of Figure \ref{fig:gprofs}, the maximum glucose concentration decreases. This is shown in Figure \ref{fig:maxgs} Indeed $\gamma(u)$ is monotonic as $\tau \to^- 370$ and monotonically decreasing as $\tau \to^+ 370$. The small deviations are an artifact of the numerical precision. With no optimisation: $\gamma(u(100)) = 7.2 \mathrm{mmolL}^{-1} \, (129 \mathrm{mgdl}^{-1})$ and $\gamma(u(700)) = 8.3 \mathrm{mmolL}^{-1} \, (149 \mathrm{mgdl}^{-1})$. Whilst $\gamma(u(370)) = 5.54 \mathrm{mmolL}^{-1} \, (100 \mathrm{mgdl}^{-1})$.

 \begin{figure}[H]
	 \begin{flushleft}
		 \includegraphics[width=8cm, height=9.7cm, keepaspectratio]{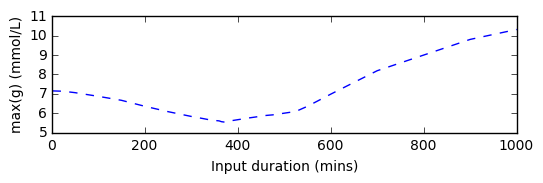}
	\end{flushleft}
	\caption{Maximum plasma glucose concentration, $\gamma(u)$, as a function of the input duration $\tau$.}
	 \label{fig:maxgs}
 \end{figure}

 Lastly, Figure \ref{fig:durs} shows $\hat u \chi_A$ for the $A$ yielding glucose profiles shown in Figure \ref{fig:gprofs}. Each interval over which the input $u(t, \tau) \neq \overline u$ is nested in the next larger interval.

\begin{figure}[H]
	 \begin{flushleft}
		 \includegraphics[width=8cm, height=9.7cm, keepaspectratio]{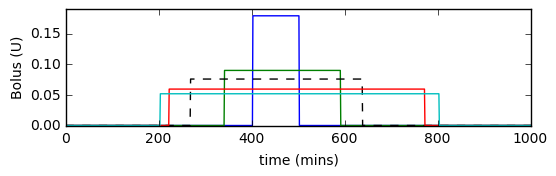}
	\end{flushleft}
	\caption{The functions $u(t,\tau) - \overline u$ for selected input durations}
	 \label{fig:durs}
 \end{figure}

 \begin{rem}
	 Figure \ref{fig:maxgs} indicates that the rate of decrease in $\gamma(u)$ drops about the optimal duration i.e. $\frac{d}{d\tau} \gamma(u,\tau) \to 0$.  Therefore, it seems that there is little benefit in over-optimising the duration.
 \end{rem}

\section{Application to Other Models}
\label{sec:oth}

For models of the form:
\begin{align}
	g = f(g, u, h, w, t)
\end{align}
where $g$ is the plasma glucose concentration, $u$ is the insulin input and  $w$ and $h$ are some bounded positive functions, the results of \cite{town17} and those presented here require that:
\begin{enumerate}
	\item{$g$ is a continuous function of $u, w$ and $t$ that decreases monotonically with respect to $u$}
	\item{$h$ is continuous function of $u$ and $t$ and is monotone in $u$.}
	\item{$w$ and $h$ decay to their respective lower bounds.} 
	\item{$g \geq g(\infty) \geq \lambda$ if $u(t) = 0$, for all $t$, where $g(\infty)$ is the desired steady-state glucose concentration.}
\end{enumerate}

The Hovorka model (\cite{hovo04}) is another dynamic model of glucose metabolism which explicitly includes a number of physiological factors, for example a renal excretion term. We give numerical examples of our results for the Hovorka model which suggest it may satisfy our assumptions. The Hovorka model is:

\begin{align}
	\begin{split}
	\dot z & = -dz + u \\
	\dot y & = -dy + dz\\
	\dot x & = -kx + cdy\\
	\dot x_1 & = -a_1 x_1 + a_1 b_1 x \\
	\dot x_2 & = -a_2 x_2 + a_2 b_2 x \\
	\dot x_3 & = -a_3 x_3 + a_3 b_3 x \\
	\dot q_1 & = -h_1 q_1 + lq_2  + w \\
	\dot q_2 & = -h_2 q_2 + x_1 q_1
	\end{split}
\end{align}%
where:
\begin{align}
	\label{eq:hovw}
	\begin{split}
	w & = E + r\\
		h_1 & = V^{-1} \l ( f_c + f_r + \frac{x_3 E}{g} \r ) + x_1\\
	h_2 & = l + x_2
	\end{split}
\end{align}%
and:
\begin{align}
	\begin{split}
		f_r & := \begin{cases} VR \l (1-\frac{\overline g_r}{g} \r ), &g \geq \overline g_r\\
					  0			    , &\text{otherwise}
	\end{cases}\\
	f_c &:= \begin{cases} {f}{g^{-1}}, &g \geq \overline g_c\\
		  	      {f}{\overline{g}_c ^{-1}}, &\text{otherwise}
	\end{cases}
	\end{split}
\end{align}%
where $a_i, b_i, c, d, E, f, \overline g_c, \overline g_r, k, l, V$ and $R$ are positive constants, physiological values for which may be found in \cite{hovo04} and $r$ is a positive bounded function. The plasma glucose $g$ is a scalar mulitple of $q_1$:
\begin{align}
	\begin{split}
		g = V^{-1} q_1
	\end{split}
\end{align}%



As for the Bergman model, we assume $u$ is a positive bounded function of the form \eqref{eq:u} such that $\lim_{t \to \infty} u(t) = \overline u$. We also assume that $\lim_{t \to \infty} r(t) = 0$. The steady-state value has a positive upper bound, $g > 0$, for $\overline u = 0$. As $g$ is a continuous function of $u$, we may choose $\lambda$ and the steady-state value, $g(\infty) = g(0)$, to be any value less than this upper bound.

\subsection{Numerical Examples}

In the example presented in Figures \ref{fig:hovgprofs}--\ref{fig:hovdurs} the algorithm to locate the optimal duration was applied to the Hovorka model with parameters as in \cite{hovo04} and a body weight, on which the values in \cite{hovo04} depend, of $70 \mathrm{kg}$. The function $r(t) := \l( \frac{1}{55} \r ) f_1(t)$ where $f_1(t)$ is the solution to the differential equations:
\[
	\begin{pmatrix}
		\dot f_1(t) \\
		\dot f_2(t)
	\end{pmatrix} 
	= 
	\l ( \frac{1}{55} \r)
	\begin{pmatrix}
		-1 &  1 \\
		0  & -1
	\end{pmatrix}
	\begin{pmatrix}
		f_1(t) \\
		f_2(t)
	\end{pmatrix} 
	+
	\begin{pmatrix}
		0 \\
		0.8 \rho(t)
	\end{pmatrix}
\]%
where: $\rho(t) := 0.2 \chi_{[300, 800]}(t) + 5\chi_{[450,460]}(t)$. The initial conditions where set such that the system was in steady-state at $t = 0$ for a steady-state value $g(\infty) = g(0) = 5.0 \mathrm{mmolL}^{-1} \, (90 \mathrm{mgdl}^{-1})$. The minimum glucose concentration $\lambda$ was chosen to be $4.0 \mathrm{mmolL}^{-1} \, (72 \mathrm{mgdl}^{-1})$. 
 
The duration tested ranged from $\sigma = 2$ to $\alpha = 1000$. In Figures \ref{fig:hovgprofs} and \ref{fig:hovdurs} the blue, green, dashed black, red and cyan lines correspond to the durations $\tau = 150, 250, 305, 450$ and $550$ respectively.

 \begin{figure}[]
	 \begin{flushleft}
		 \includegraphics[width=8cm, height=9.7cm, keepaspectratio]{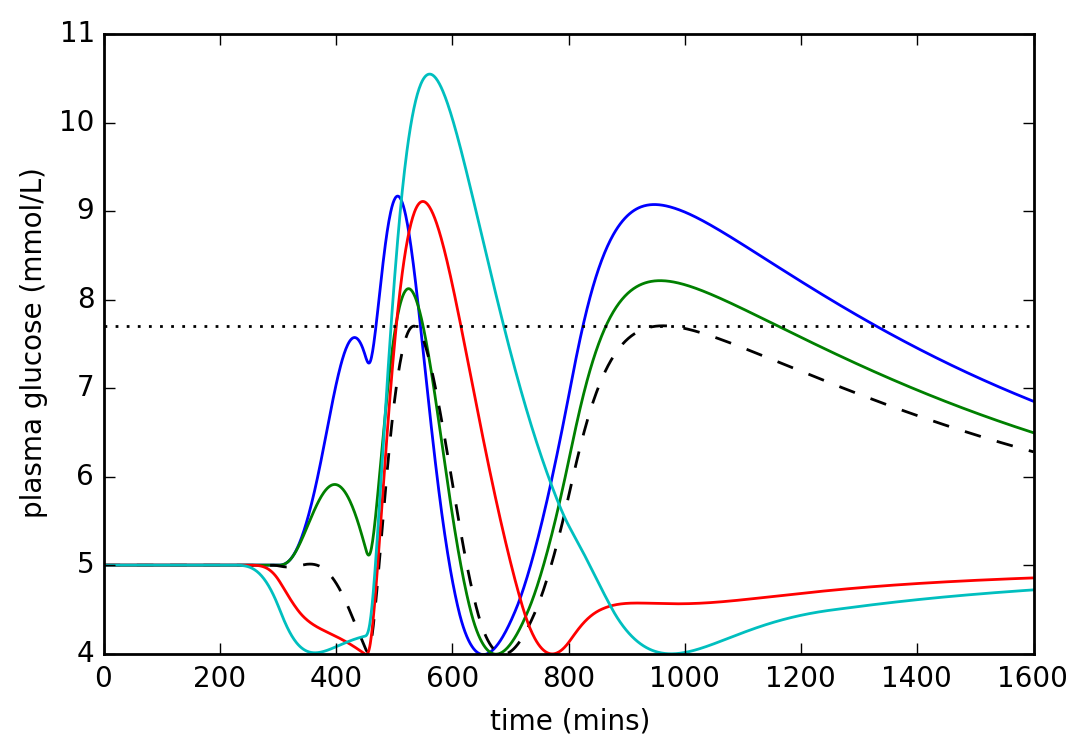}
	\end{flushleft}
	\caption{Selected plasma glucose concentration $g(t)$  profiles for various input durations. The dashed black profile meets the global optimality conditions, for the Hovorka model.}
	 \label{fig:hovgprofs}
 \end{figure}

 As the duration approaches $\tau = 305$, which corresponds to the dashed black profile of Figure \ref{fig:hovgprofs}, the maximum glucose concentration decreases monotonically from above and below. With no optimisation: $\gamma(u(2)) = 9.8 \mathrm{mmolL}^{-1} \, (176 \mathrm{mgdl}^{-1})$ and $\gamma(u(1000)) = 17.5 \mathrm{mmolL}^{-1} \, (315 \mathrm{mgdl}^{-1})$. Whilst, the optimal duration $\gamma(u(305)) = 7.7 \mathrm{mmolL}^{-1} \, (139 \mathrm{mgdl}^{-1})$.
 
 \begin{figure}[H]
	 \begin{flushleft}
		 \includegraphics[width=8cm, height=9.7cm, keepaspectratio]{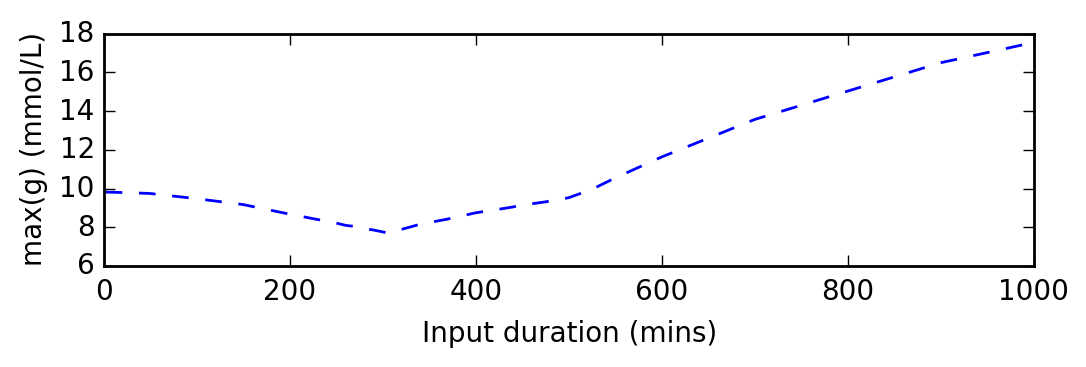}
	\end{flushleft}
	\caption{Maximum plasma glucose concentration, $\gamma(u)$, as a function of the input duration $\tau$, for the Hovorka model.}
	 \label{fig:hovmaxgs}
 \end{figure}

 Figure \ref{fig:hovdurs} shows $\hat u \chi_A$ for the $A$ yielding the glucose profiles shown in Figure \ref{fig:hovgprofs}. Each interval over which the input $u(t, \tau) \neq \overline u$ is nested in the next larger interval.

\begin{figure}[H]
	 \begin{flushleft}
		 \includegraphics[width=8cm, height=9.7cm, keepaspectratio]{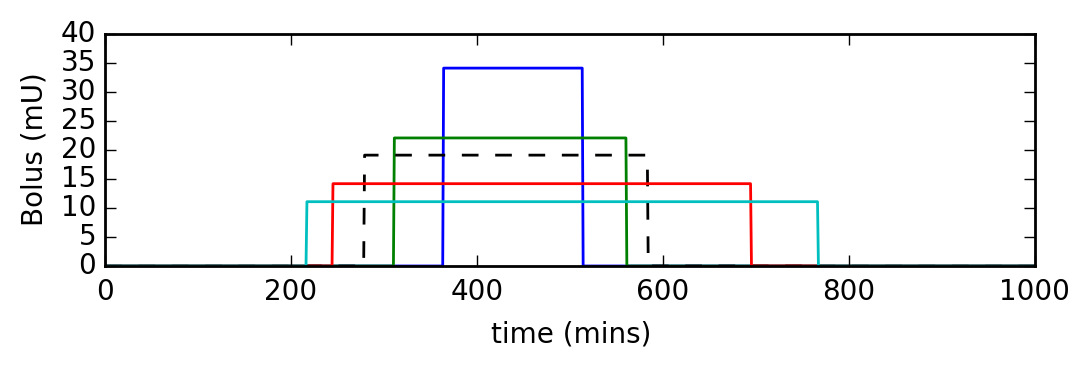}
	\end{flushleft}
	\caption{The functions $u(t,\tau) - \overline u$ for selected input durations for the Hovorka model}
	 \label{fig:hovdurs}
 \end{figure}

 In the example presented in Figures \ref{fig:hovifacresp}--\ref{fig:hovifacinput}, we demonstrate that the results of \cite{town17} apply to the Hovorka model. All values and functions were taken to be as in the previous example with a fixed duration $\tau = 200$. \Cref{fig:hovifacresp} shows three responses of the Hovorka model to a proper pulse delivered at $300, 339$ and $337$, these correspond to the blue, green and red responses, respectively. The green response has two equal maxima bounding the minimum and for which $\gamma = 8.7 \mathrm{mmolL}^{-1} \, (157 \mathrm{mgdl}^{-1})$. The dashed black line is the optimal glucose concentration achieved for this system in the previous example i.e. when both the input time and duration where optimised.

 In \Cref{fig:hovifacinput} the maximum plasma glucose and magnitude of the proper input bolus $\hat u$ is shown as a function of the input time $t'$. The lowest maximum occurs at $t' = 339$, corresponding to the green response in \Cref{fig:hovifacresp}.

\begin{figure}[H]
	 \begin{flushleft}
		 \includegraphics[width=8cm, height=9.7cm, keepaspectratio]{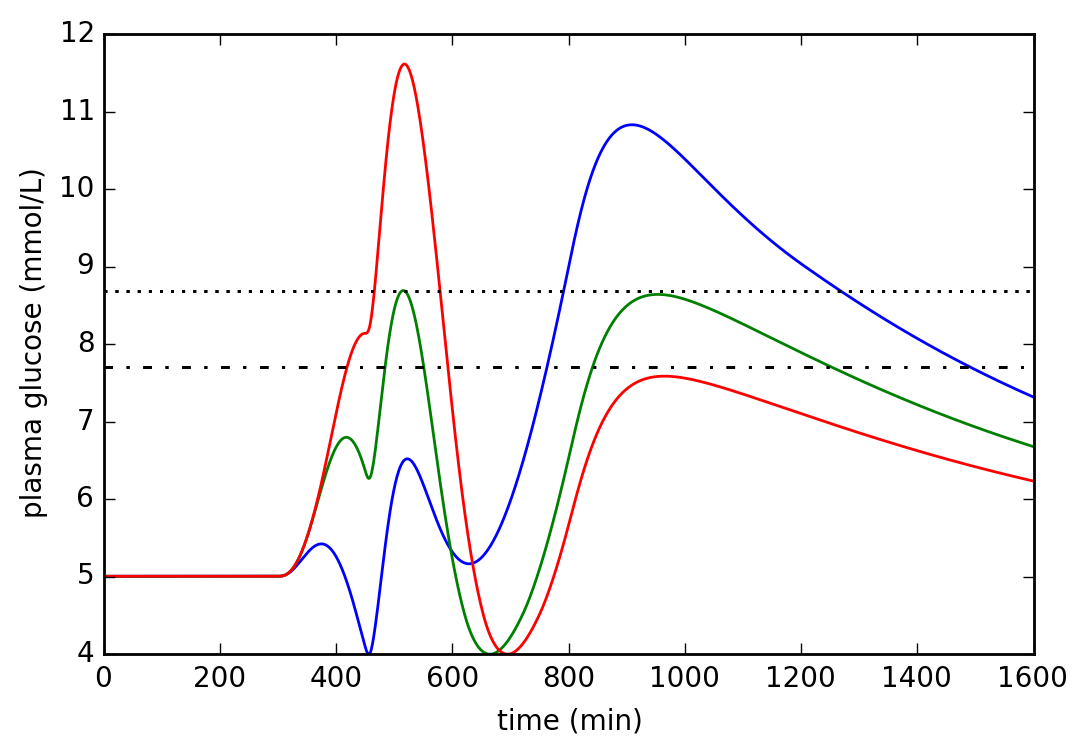}
	\end{flushleft}
	\caption{Glucose responses of the Hovorka model to pulse inputs with fixed duration $\tau =200$ and varying input times.}
	 \label{fig:hovifacresp}
 \end{figure}

\begin{figure}[H]
	 \begin{flushleft}
		 \includegraphics[width=8cm, height=9.7cm, keepaspectratio]{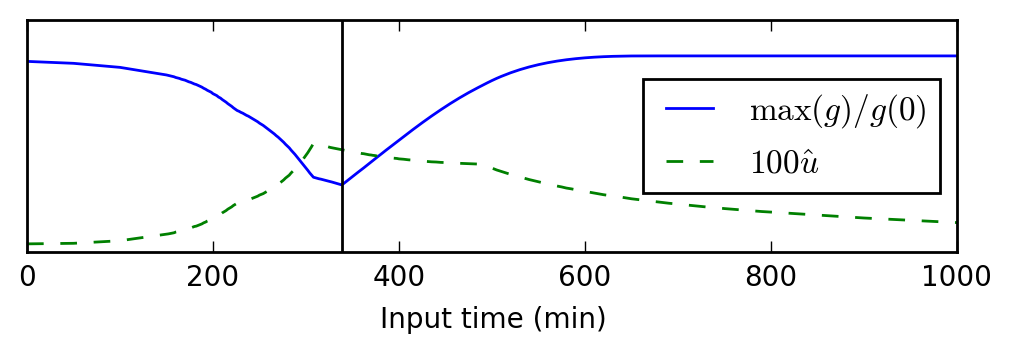}
	\end{flushleft}
	\caption{Normalised maximum plasma glucose and magnitude of the proper input bolus with a fixed duration $\tau=200$ as a function of the input time, for the Hovorka model.}
	 \label{fig:hovifacinput}
 \end{figure}

\section{Conclusions and Further Work}

We have given necessary and sufficient characterisations of the optimality of pulse inputs to the Bergman minimal and Hovorka models in terms of the shape of the predicted plasma glucose concentration. This paper, in conjunction with \cite{town17}, determines the magnitude of the maximum glucose concentration in response to changes in the parameters of a pulse input. These results demonstrate the possibility of rejecting disturbances by tuning the duration and delivery time of a bolus input of some shape.

Current research aims to generalise the presented results to any bounded input function $u(t)$. We are also interested in characterising the behaviour of $\frac{d}{d \tau} \gamma(u(\tau))$ -- the rate of change of the maximum of the response $g$ as a function of the duration $\tau$. This may provide conditions which guarantee the existence of $g(t) > g(0)$, for all durations, or a $\gamma$--optimal input, which are required for \Cref{thm:mainmain}.

Given the general nature of the proofs of the current results we believe it is likely that similar results hold for other models of glucose metabolism.

\bibliographystyle{IEEEtran}
\bibliography{IEEEabrv,references}
\end{document}